\newtheorem{theorem}{Theorem}
\newtheorem{lemma}[theorem]{Lemma}
\newtheorem{prop}[theorem]{Proposition}
\newtheorem{corr}[theorem]{Corollary}
\theoremstyle{definition}
\newtheorem{mydef}[theorem]{Definition}
\newtheorem{remark}[theorem]{Remark}
\numberwithin{equation}{section}
\begin{document}
\title{ A Polynomial Variation of Meinardus' Theorem}

\author{Daniel Parry}
\address{Department of Mathematcs \\ Drexel University \\ Philadelphia, PA 19104}
\email{dtp29@drexel.edu}

\subjclass[2010]{Primary: 11P55.  Secondary:  11P82 , 11M41, 11C08     }

\date{\today}

\begin{abstract}We develop a polynomial analogue of Meinardus' Thoerem for bivariate Euler products and apply it to the study of complex multiplicatively weighted partitions. \end{abstract}
\maketitle

A partition $\lambda=(\lambda_1,\lambda_2,...\lambda_k)$ of $n$ is a weakly decreasing sequence of positive integers whose sum is $n.$ Let $\{\mu_i\}$ be a sequence of complex numbers and let $p_w(n)$ count the number of partitions of $n$ under the complex multiplicative weight $w.$ That is, $\lambda$ is counted with weight $w(\lambda_1,\lambda_2,...\lambda_k) = \prod_{i=1}^k \mu_{\lambda_i}.$ Multiplicative weights appear to have been introduced in \cite{Wright01011934} and are a generalization of the classical partition counting problem. Letting $\mu_i= 1_{S}$ for $S\subset \mathbb{N}$ be the indicator function with support on $S$ forces $p_w(n)$ to equal the number of partitions of $n$ from a set $S.$

In general $p_w(n)$ is generated by the function \[f(q)=1+  \sum_{n=1}^\infty p_w(n)q^n = \prod_{m=1}^\infty \frac{1}{1-\mu_m q^m}. \]   If we let $\mu_i=z1_{S}$ we obtain the formula \begin{equation}\label{generatingfunction1}f(q)=1+  \sum_{n=1}^\infty p_w(n)q^n = \prod_{m\in S} \frac{1}{1-z q^m}. \end{equation}  In this case, $p_w(n)$ becomes a polynomial of degree $\le n$ in $z$ and $w(\lambda_1,\lambda_2,...\lambda_k)=z^k.$  For instance, if $\{1,2,3,4\}\subset S$ then $p_w(4) =z^4 + z^3 + 2z^2+z.$ Polynomials with generating functions similar to Equation \ref{generatingfunction1} have been studied by several authors \cite{2004math.....11377M, Wright01011934, PhysRevLett.79.1789, MR1855788, MR2880680, MR2427666, MR1631751}.  Motivated by applications in probability, some of the above writers have developed approximations to some of these polynomials when $z>0.$  However, developing approximations to $p_w(n)$ for complex $z$ has only been done in a few cases \cite{MR2880680, MR2427666, ParryBoyerPhase}.

When $z=1,$ there are a variety of formulas and techniques that have been developed to estimate $p_w(n).$  One popular technique we focus on is the theorem of Meinardus \cite{MeinardusTheorem} (See Theorem 6.2 in \cite{MR1634067} for a translated version). Meinardus essentially relates the asymptotic formulae of the coefficients \[1+\sum_{n=1}^\infty r(n)q^n = \prod_{m=1}^\infty \frac{1}{(1-q^m)^{a_m} }\] to the analytic behavior of the Dirichlet series $D(s)= \sum_{n=1}^\infty  a_n / n^s$ and the Fourier series $g(\tau) = \sum_{k=0}^\infty  a_k q^k. $ For this paper, assume the standard notation $s=\sigma +it,$ $\tau= 2\pi \alpha -i 2\pi \psi,$ and $q=e^{-\tau}.$ 

To be precise, suppose:
\begin{enumerate}  
\item Assume $D(s)$ converges for $\sigma >s_0.$
\item For some $\sigma_0\in (-1,0),$ $D(s)$  has a meromorphic continuation to $\sigma\geq \sigma_0$ with a simple pole at $s_0$ with residue $A.$ 
\item There is a $C>0$ so that as $|t|\to \infty$  $D(s)=O(|t|^{C}).$
\item For $|\arg \tau|>\pi/4,$ there is a $\epsilon>0$ and $C'>0$ so that as $|\alpha |\to 0$ \[\Re g(\tau)-g(2\pi \alpha)\le -C' |\alpha|^{-\epsilon}.\]

\end{enumerate}
then \begin{theorem}  As $n\to \infty$ \[r(n)=Cn^\kappa \exp\left(\frac{s_0+1}{s_0} n^\frac{s_0}{s_0+1} [A\Gamma(s_0+1)\zeta(s_0 +1)]^\frac{1}{s_0+1} )\right)(1+O(n^{-\kappa_1}))\] where $\zeta(s)= \sum_{m=1}^\infty m^{-s}$ is the Riemann zeta function, and 
\begin{align*}
C&= e^{D'(0)}[2\pi(1+s_0)]^{-\frac{1}{2}}[A\Gamma(s_0+1)\zeta(s_0+1)]^\frac{1-2D(0)}{2s_0+2} \\
\kappa &=\frac{D(0)-1-\frac{1}{2}s_0 }{1+s_0} \\
\kappa_1 &=\frac{s_0}{s_0+1}\min \left(\frac{-\sigma_0}{s_0}-\frac{\delta}{4}, \frac{1}{2}-\delta \right)
\end{align*} $\delta$ an arbitrary real number.  
\end{theorem}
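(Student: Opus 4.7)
The approach I would take is the classical Hardy--Ramanujan--Meinardus saddle-point argument. Cauchy's integral formula gives
\[r(n) = e^{2\pi n\alpha}\int_{-1/2}^{1/2} f(e^{-\tau})\,e^{2\pi i n\psi}\,d\psi,\]
where $\tau = 2\pi\alpha - 2\pi i\psi$; the plan is to select $\alpha = \alpha^*$ at the saddle point of the real exponent $n\tau + g(\tau)$ (evaluated at $\psi=0$), split the $\psi$-integral into a major arc $|\psi|\le \psi_0$ and its complement, evaluate the major arc by a Gaussian approximation, and control the minor arcs using hypothesis (4).

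The first technical step extracts an asymptotic expansion for $g(\tau)$ as $\tau\to 0^+$. Writing $g(\tau)=\sum_{m,k\ge 1}a_m e^{-km\tau}/k$, a Mellin transform calculation produces the Mellin--Barnes representation
\[g(\tau)=\frac{1}{2\pi i}\int_{c-i\infty}^{c+i\infty}\Gamma(s)\zeta(s+1)D(s)\,\tau^{-s}\,ds,\qquad c>s_0.\]
Shifting the contour to $\Re s = \sigma_0$ and exploiting the polynomial growth hypothesis (3) together with the exponential decay of $\Gamma$ on vertical lines bounds the shifted integral by $O(|\tau|^{-\sigma_0})$. The residues at $s=s_0$ (simple) and $s=0$ (double, produced by $\Gamma(s)\zeta(s+1)$) yield
\[g(\tau)=A\Gamma(s_0)\zeta(s_0+1)\,\tau^{-s_0}-D(0)\log\tau+D'(0)+O(|\tau|^{-\sigma_0}).\]

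Differentiating and using $s_0\Gamma(s_0)=\Gamma(s_0+1)$, the saddle point satisfies $\tau^*\sim[A\Gamma(s_0+1)\zeta(s_0+1)/n]^{1/(s_0+1)}$, and the critical value telescopes to
\[n\tau^*+g(\tau^*)\;\sim\;\frac{s_0+1}{s_0}\,n^{s_0/(s_0+1)}\bigl[A\Gamma(s_0+1)\zeta(s_0+1)\bigr]^{1/(s_0+1)},\]
which is the stated main exponent. Setting $\alpha^* = \tau^*/(2\pi)$ and expanding $g(2\pi\alpha^* - 2\pi i\psi)$ to second order in $\psi$ produces a Gaussian integral whose quadratic coefficient works out to $g''(\tau^*) = (s_0+1)n/\tau^*$. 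The resulting normalization $[2\pi(1+s_0)]^{-1/2}$ combines with the polynomial prefactors $(\tau^*)^{-D(0)}e^{D'(0)}$ coming from the $\log\tau$ and constant terms to assemble the constant $C$ and exponent $\kappa$ exactly as stated. On the minor arcs, hypothesis (4) gives $\Re g(\tau)-g(2\pi\alpha^*)\le -C'(\alpha^*)^{-\epsilon}$, so those arcs contribute an exponentially smaller amount than the major arc and are negligible.

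The main obstacle is the bookkeeping in the Gaussian step: one must choose $\psi_0$ small enough that the cubic remainder in the Taylor expansion of $g$ produces an error of size $n^{-\kappa_1}$, yet large enough that the region $|\psi|>\psi_0$ is captured by the minor-arc bound (4); simultaneously the contour abscissa $\sigma_0$ governing the Mellin remainder and the auxiliary parameter $\delta$ must be calibrated to realize the $\min(\cdot,\cdot)$ in the definition of $\kappa_1$. Verifying that the powers of $\tau^*$, the residue at $s=0$, and the Gaussian normalization multiply to reproduce the constants $C$, $\kappa$ exactly (rather than up to an unidentified scalar) is routine but requires care.
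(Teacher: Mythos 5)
This theorem is Meinardus' classical result, which the paper only quotes (citing Meinardus and the translation in Andrews) and never proves; the paper's own machinery --- the Farey dissection and Proposition \ref{Factorization} --- is aimed at the bivariate generalization, so the relevant benchmark for your sketch is the standard Meinardus/Andrews saddle-point proof, and that is indeed the route you take. Your Mellin--Barnes expansion of $\log f(e^{-\tau})$, the residues at $s=s_0$ and the double pole at $s=0$ giving $A\Gamma(s_0)\zeta(s_0+1)\tau^{-s_0}-D(0)\log\tau+D'(0)+O(|\tau|^{-\sigma_0})$, the saddle $\tau^*=(A\Gamma(s_0+1)\zeta(s_0+1)/n)^{1/(s_0+1)}$, and the bookkeeping that assembles $C$ and $\kappa$ from $(\tau^*)^{1/2-D(0)}e^{D'(0)}[2\pi(s_0+1)n]^{-1/2}$ are all correct.

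There is, however, a genuine gap in your treatment of the complement of the major arc. Hypothesis (4) only controls the region $|\arg\tau|>\pi/4$, i.e.\ $|\psi|>\alpha^*$, whereas the Gaussian approximation forces the major arc to be much narrower than $\alpha^*$ (its width must be $o(\alpha^*)$, calibrated by $\delta$ so that the cubic term in the Taylor expansion is negligible and the error $n^{-\kappa_1}$ comes out). Your two-way split ``major arc plus complement controlled by (4)'' therefore leaves the intermediate range $\psi_0<|\psi|\le\alpha^*$ uncovered; there one must argue separately, using the Mellin expansion (which is valid there, since $|\arg\tau|\le\pi/4$) together with the elementary saving $\Re\,\tau^{-s_0}\le(2\pi\alpha^*)^{-s_0}\bigl(1+\psi^2/\alpha^{*2}\bigr)^{-s_0/2}$, to show the contribution is exponentially smaller than the main term; this three-region dissection is exactly how the classical proof proceeds. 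A second, smaller point: condition (4) is stated for the auxiliary series $g(\tau)=\sum_k a_k e^{-k\tau}$, not for $\log f$ as you implicitly use it; passing from one to the other requires the nonnegativity of the $a_m$ (drop the $k\ge2$ terms of $\log f$ after bounding $\cos(2\pi km\psi)\le 1$), a hypothesis you should invoke explicitly since the bound fails for general real weights.
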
  By letting $a_m=1_S$ we are able to compute $p_w(n)$ for a class of $w.$ Recently, a few authors have made variations and improvements \cite{MR2449593, 2007math......1584G} on Meinardus' result.  This paper will continue in that direction.  This paper will apply the circle method to develop an analogue of Meinardus' theorem for polynomials of the form \[P(z,q)=1+\sum_{n=1}^\infty Q_n(z)q^n = \prod_{m=1}^\infty \frac{1}{(1-zq^m)^{a_m} }\] when $z\in \mathbb{D},$  where $\mathbb{D}$ is the open unit disk and $a_m\in \mathbb{R}.$ Hence by the same idea of  letting $a_m=1_S$  we will be able to estimate $p_w(n)$ for a large class of weights $w.$ Concurrently, this paper provides the key step in generalizing the results in  \cite{MR2880680, MR2427666}.

\section{Statement of Main Theorem}
Consider \[P(z,q)=1+\sum_{n=1}^\infty Q_n(z)q^n = \prod_{m=1}^\infty \frac{1}{(1-zq^m)^{a_m} }\] for $z\in \mathbb{D}.$ Define the family of twisted Dirichlet series  $\{D_{h,k}(s) \}$ with $h,k\in \mathbb{N}$ \[D_{h,k}(s) = \sum_{m=1}^\infty \frac{e^{2\pi i \frac{hm}{k}}a_m}{m^s}.\] Fix $c,s_0>0,$ $-1<\sigma_0<0,$ and assume \begin{enumerate} \item Each Dirichlet series $D_{h,k}(s)$ converges uniformly and absolutely in some half plane $\sigma >c>0. $   
\item Each $D_{h,k}(s)$ has a meromorphic continuation to $\sigma\geq \sigma_0$ with a simple pole at $0<s_0<c$ with residue $A_{h,k}.$ 
\item There exists constants $C_1(\sigma_0,c), C_2(\sigma_0,c)>0$ so that for each $\sigma\in [\sigma_0,c]$ \[|(s-s_0)D_{h,k}(\sigma+it)|\le C_1(1+|t|)^{C_2} k^{s_0+|\sigma|}.\]
    \newcounter{enumTemp}
    \setcounter{enumTemp}{\theenumi}

\end{enumerate}
Both $D_{h,k}(0)$ and $A_{h,k}$ are periodic functions of $h$ with $k$ fixed so we can define discrete Fourier expansions \[D_{h,k}(0) = \sum_{j\in \mathbb{Z}_k}e^{2\pi i \frac{hj}{k}}b(j), \qquad A_{h,k}= \sum_{j\in \mathbb{Z}_k}e^{2\pi i \frac{hj}{k}}c(j).\]
Likewise it is also useful to define \[J(s;z,r,h,k,w) = \Gamma(s) \Phi(z^k,s+1, \frac{r}{k}) D_{rh,k}(s)(k w)^{-s}\] where $\Phi(z,s,\nu)$ is the Lerch phi function\[\Phi(z,s,\nu)=\sum_{n=0}^\infty \frac{z^n}{(n+\nu)^s}\] and we define the polylogarithm $Li_s(z)=\Phi(z,s,1)$ as a special case.

\begin{prop} \label{Factorization} Let $z\in \mathbb{D}$ be the open unit disk, $h,k$ be relatively prime positive integers with $h\le k$ and $\Re w>0.$ Then 
\begin{align*}\ln(P(z,e^{2\pi i\frac{h}{k} -w} )) &= \Psi_{h,k}(z,w)+\ln \omega_{h,k,n}(z) + g_{h,k}(z,w)+ 2\pi in\frac{h}{k},
\end{align*}  where \begin{align*}
\Psi_{h,k}(z,w) &=\frac{\Gamma(s_0+1)}{s_0  w^{s_0}}\sum_{j\in \mathbb{Z}_k}c(j)Li_{s_0+1}(e^{2\pi i \frac{hj}{k}}z), \\
\omega_{h,k,n}(z)&=e^{-2\pi i \frac{hn}{k}} \prod_{j\in \mathbb{Z}_k}(1-e^{2\pi i \frac{hj}{k}}z)^{-b(j)},\\
g_{h,k}(z,w)&=\frac{1}{k}\sum_{r=1}^{k} z^r \frac{1}{2\pi i} \int_{\sigma_0-i\infty}^{\sigma_0+i\infty} J(s;z,r,h,k,w) ds. 
\end{align*}
\end{prop}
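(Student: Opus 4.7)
The strategy is to convert $\ln P(z, e^{2\pi i h/k - w})$ into a Mellin--Barnes integral and then shift the contour to capture the residues at $s = s_0$ and $s = 0$, identifying them with $\Psi_{h,k}$ and $\ln\omega_{h,k,n} + 2\pi i n h/k$ respectively. To begin, expand
\[
\ln P(z,q) \;=\; \sum_{m=1}^\infty a_m \sum_{r=1}^\infty \frac{z^r q^{mr}}{r}
\]
and substitute $q = e^{2\pi i h/k - w}$, giving $q^{mr} = e^{2\pi i h m r/k}e^{-wmr}$. Applying the Mellin representation $e^{-wmr} = (2\pi i)^{-1}\int_{(c)}\Gamma(s)(wmr)^{-s}\,ds$ with $c > s_0$ and interchanging sums with the integral (justified by assumption (1) on a half-plane of absolute convergence), the $m$-sum folds into $D_{rh,k}(s)$. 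Grouping the $r$-sum by residues modulo $k$ via $r = r' + k\ell$, the $\ell$-sum assembles into $k^{-s-1}\Phi(z^k, s+1, r'/k)$. Collecting all factors yields
\[
\ln P(z, e^{2\pi i h/k - w}) \;=\; \frac{1}{k}\sum_{r=1}^{k} z^r \cdot \frac{1}{2\pi i}\int_{c-i\infty}^{c+i\infty} J(s;z,r,h,k,w)\,ds.
\]

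Next I shift the contour from $\Re s = c$ down to $\Re s = \sigma_0$. In the intervening strip the integrand has exactly two simple poles: one at $s = s_0$ (from $D_{rh,k}$, residue $A_{rh,k}$) and one at $s = 0$ (from $\Gamma$, residue $1$). For $s_0$, substituting the Fourier decomposition $A_{rh,k} = \sum_j c(j) e^{2\pi i r h j/k}$ and setting $u = z e^{2\pi i h j/k}$ (so that $u^k = z^k$), the elementary identity
\[
\sum_{r=1}^k u^r\,\Phi(z^k, s+1, r/k) \;=\; k^{s+1} Li_{s+1}(u),
\]
which follows by writing $r + k\ell = n$ ranging over all $n \geq 1$, together with $\Gamma(s_0) = \Gamma(s_0+1)/s_0$, collapses the weighted sum of residues into exactly $\Psi_{h,k}(z,w)$. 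The analogous computation at $s = 0$ uses $D_{rh,k}(0) = \sum_j b(j) e^{2\pi i r h j/k}$ and the $s = 0$ case of the same identity, in which $Li_1(u) = -\ln(1-u)$; this produces $-\sum_j b(j)\ln(1 - z e^{2\pi i h j/k})$, which by definition of $\omega_{h,k,n}$ equals $\ln \omega_{h,k,n}(z) + 2\pi i n h/k$. The leftover integral along $\Re s = \sigma_0$ is $g_{h,k}(z, w)$ by definition.

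The main obstacle is justifying the contour shift, i.e.\ showing the integrand decays rapidly enough on the horizontal edges $\Im s = \pm T$ as $T \to \infty$. Stirling gives $|\Gamma(\sigma + i T)| \ll |T|^{\sigma - 1/2} e^{-\pi |T|/2}$ uniformly for $\sigma \in [\sigma_0, c]$; assumption (3) supplies the polynomial bound $|(s - s_0)D_{rh,k}(s)| \ll (1+|T|)^{C_2} k^{s_0+|\sigma|}$; and since $|z^k| < 1$ the defining series for $\Phi(z^k, s+1, r/k)$ converges geometrically uniformly in $s$ on bounded strips, so this factor is $O(1)$ on the strip. The exponential decay of $\Gamma$ dominates, so Cauchy's theorem applies and the stated decomposition follows. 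A final check, via assumption (2), confirms that no additional singularities of $D_{rh,k}$ lie in $\sigma_0 \leq \Re s \leq c$, and that $\Phi(z^k, s+1, r/k)$ is entire in $s$ for $|z| < 1$, so the pole inventory in the strip consists of only $s_0$ and $0$.
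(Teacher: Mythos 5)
Your proposal is correct and follows essentially the same route as the paper: expand the logarithm, apply the Cahen--Mellin representation, fold the $m$-sum into $D_{rh,k}(s)$ and the $l$-sum (grouped mod $k$) into the Lerch function, shift the contour to $\Re s=\sigma_0$, and identify the residues at $s_0$ and $0$ with $\Psi_{h,k}$ and $\ln\omega_{h,k,n}+2\pi i n\frac{h}{k}$ via the identity $\sum_{r=1}^k u^r\Phi(u^k,s,r/k)=k^sLi_s(u)$. Your explicit justification of the contour shift (Stirling decay of $\Gamma$ against the polynomial bound of Assumption (3)) is a detail the paper leaves implicit, but the argument is the same.
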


\begin{mydef}\label{DefPhase}Fix a domain $D\subset \mathbb{D}$ and for each pair of relatively prime positive integers $h,k$ with $0< h\le k$ we define  $L_{h,k}(z) = \sqrt[s_0+1]{w^{s_0}s_0\Psi_{h,k}(z,w)}.$  Here we define the $s_0+1$ root as on the principal branch with $\arg z \in (-\pi,\pi].$  We define $[L_{p,q}(z)]$ as the equivalence class of $\{L_{h,k}(z)\}$ whose real component is identical to $\Re L_{p,q}(z)$ on $D.$ The set $R(p,q)$ we call the $(p,q)$-th phase (or simply phase $(p,q)$) is defined by \[R(p,q)= \{z\in D:  L_{h,k}(z) \not \in [L_{p,q}(z)] \implies \Re L_{p,q}(z)> \Re L_{h,k}(z)\}.\]
\end{mydef} \noindent We require two additional hypothesize on these $L_{h,k}(z)$ functions to provide asymptotic approximations.  Let $X\subset R(p,q)$ be a compact set and   \begin{enumerate}
    \setcounter{enumi}{\theenumTemp}
\item There exists positive constants $L^m_X=\inf_{z\in X} \Re L_{p,q}(z),$ and $L^M_X = \sup_{z\in X} \Re L_{p,q}(z).$  
\item The function $\Re L_{h,k}(z)$ vanishes uniformly on $X$ as $k$ grows large.  
 \end{enumerate} 
\begin{remark}  The function $\Re L_{h,k}(z)$ can be interpreted as a measure of the relative strength of arc $q=e^{2\pi i \frac{h}{k}}$ in the circle method.  In this context, a phase is simply a set where the major arcs are well defined. \end{remark} 
\begin{remark} Unlike typical circle method calculations, it is not unusual for major arcs to be more than just $q=1.$  Even in the simplest of examples, say $a_m=1,$ we observe $z$ where major arcs could be $q=1, -1, e^{\pm 2\pi i \frac{1}{3}}$ or any combination of the three. This is the primary difficulty in this generalization.
 \end{remark} 

Under these conditions, we have an expansion of $Q_n(z)$ given by \[Q_n(z)=\sum_{\{(h,k) :\ L_{h,k}(z)\in [L_{p,q}(z)]\}} \omega_{h,k,n}(z) \tilde{I}_{h,k,n}(z)\] where $I_{h,k,n}(z)$ can be estimated by
  \begin{theorem}\label{FourierCoeff}
If $X\subset \{z\in R(p,q): w^{s_0}s_0\Psi_{p,q}(z, w )\not \le 0\}$  then  \begin{align*}\tilde{I}_{h,k,n}(z)&\thicksim_X  \sqrt{\frac{L_{h,k}(z)}{n^\frac{s_0+2}{s_0+1}2\pi (s_0+1)}} \exp\left( \frac{s_0+1}{s_0}n^\frac{s_0}{s_0+1} L_{h,k}(z)\right).
 \end{align*} If $X\subset \{z\in R(p,q): w^{s_0}s_0\Psi_{p,q}(z,w)\le 0\}$  then
 
 \begin{align*}\tilde{I}_{h,k,n}(z)&\thicksim_X 2\Re \left[\sqrt{\frac{L_{h,k}(z)}{n^\frac{s_0+2}{s_0+1}2\pi (s_0+1)}} \exp\left( \frac{s_0+1}{s_0}n^\frac{s_0}{s_0+1} L_{h,k}(z)\right)\right].
 \end{align*} \end{theorem}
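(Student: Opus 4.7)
The plan is to express $\tilde{I}_{h,k,n}(z)$ as a saddle-point integral using Proposition \ref{Factorization}, then apply the method of steepest descent. Since $\tilde{I}_{h,k,n}(z)$ is the contribution from the $(h,k)$-arc to the Cauchy-integral representation of $Q_n(z)$ after factoring out $\omega_{h,k,n}(z)$, the parametrization $q=e^{2\pi i h/k - w}$ together with Proposition \ref{Factorization} yields
\begin{equation*}
\tilde{I}_{h,k,n}(z) = \frac{1}{2\pi i}\int_{\gamma} \exp\!\left(nw + \Psi_{h,k}(z,w) + g_{h,k}(z,w)\right) dw
\end{equation*}
on an appropriate vertical contour $\gamma$. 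By Definition \ref{DefPhase}, $w^{s_0} s_0 \Psi_{h,k}(z,w)$ is independent of $w$, so the leading exponent is $\phi(w):= nw + L_{h,k}(z)^{s_0+1}/(s_0 w^{s_0})$, whose critical equation $w^{s_0+1} = L_{h,k}(z)^{s_0+1}/n$ gives the saddle $w_0 = L_{h,k}(z)\, n^{-1/(s_0+1)}$, saddle value $\phi(w_0)=\tfrac{s_0+1}{s_0}L_{h,k}(z)\, n^{s_0/(s_0+1)}$, and second derivative $\phi''(w_0)=(s_0+1)\,n^{(s_0+2)/(s_0+1)}/L_{h,k}(z)$.

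Next I would deform $\gamma$ to pass through $w_0$ along the direction of steepest descent (where $\phi''(w_0)(\delta w)^2$ is real and negative), localize to a neighborhood of radius $\sim n^{-(s_0+2)/(2s_0+2)}\sqrt{|L_{h,k}(z)|}$, and bound the tails. Laplace's method then gives
\begin{equation*}
\tilde{I}_{h,k,n}(z) \sim \frac{e^{\phi(w_0)+g_{h,k}(z,w_0)}}{\sqrt{2\pi\,\phi''(w_0)}} = \sqrt{\frac{L_{h,k}(z)}{2\pi(s_0+1)\,n^{(s_0+2)/(s_0+1)}}} \exp\!\left(\frac{s_0+1}{s_0}L_{h,k}(z)\, n^{s_0/(s_0+1)}\right),
\end{equation*}
after verifying that $g_{h,k}(z,w_0)$ vanishes as $n\to\infty$, which follows by shifting the Mellin contour in the definition of $g_{h,k}$ past $\sigma=0$ and using the polynomial growth hypothesis on $D_{h,k}$. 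This handles the non-degenerate case $w^{s_0} s_0 \Psi_{p,q}(z,w)\not\le 0$, where the principal branch places the dominant saddle at a point with strictly positive real part and the steepest-descent path can be chosen cleanly.

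For the degenerate case $w^{s_0} s_0 \Psi_{p,q}(z,w)\le 0$, the principal branch of $L_{h,k}(z)$ has argument $\pi/(s_0+1)$, and because the integrand is real-analytic in $z$ the complex-conjugate point $\bar{w}_0$ is an equally dominant saddle; the deformation must pass through both, and their contributions are complex conjugates, producing the $2\Re$ in the stated formula. The main obstacle, I expect, will be the $X$-uniform justification of the error estimates: one must (i) show that $g_{h,k}(z,w)$ is sub-dominant along the entire deformed contour, not merely at $w_0$, by pushing the Mellin line to $\sigma=\sigma_0<0$ and invoking hypothesis (3); (ii) bound the off-saddle portion of $\gamma$ using the strict separation of $\Re L_{p,q}$ from other $\Re L_{h,k}$ on $R(p,q)$ together with the compactness of $X$ and the bounds $L_X^m, L_X^M$; and (iii) in the degenerate case, handle the interaction of the two conjugate saddles carefully, since the non-integer exponent $s_0+1$ makes the local geometry near $w_0$ less standard than in classical partition asymptotics.
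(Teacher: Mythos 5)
Your proposal is correct and follows essentially the same route as the paper: both evaluate the major-arc contribution by a saddle-point argument at $w_0 = L_{h,k}(z)\,n^{-1/(s_0+1)}$, with the same saddle value $\tfrac{s_0+1}{s_0}n^{s_0/(s_0+1)}L_{h,k}(z)$ and Hessian $(s_0+1)n^{(s_0+2)/(s_0+1)}/L_{h,k}(z)$, control the error term $g_{h,k}$ through the shifted Mellin contour at $\sigma_0$ (the paper's Lemmas \ref{Factorization:est} and \ref{appliedest}), and obtain the $2\Re$ in the degenerate case $w^{s_0}s_0\Psi_{p,q}(z,w)\le 0$ from the pair of conjugate saddles (the paper's Lemmas \ref{criticallemma} and \ref{Ikestimation}). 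The only cosmetic difference is that the paper does not deform to a steepest-descent path: it fixes the contour radius via $\alpha_n=\Re L_{p,q}(z)/(2\pi n^{1/(s_0+1)})$ so the circle already passes through the saddle(s) of every major arc, and replaces your deformation step by the real-variable maximization in Lemma \ref{criticallemma} together with the Farey-arc length estimates.
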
 \begin{remark} To make the theorem statement concise we abuse the notation.  By saying $a_n\thicksim \Re b_n$ in Theorem \ref{FourierCoeff}, we actually mean $a_n=|b_n|(\cos(\arg b_n) +O(n^{-\mu})).$ Likewise, for both estimates the relative error is $O(n^{-\mu})$ where   \[\mu = \min \left( \frac{-\sigma_0}{s_0+1},\frac{s_0}{2s_0+2}\right). \]
 \end{remark}
 
 \begin{remark} The second approximation in Theorem \ref{FourierCoeff}, suggests that when $L_{h,k}(z)$ fails to be analytic, we expect $Q_n(z)$ should have a highly oscillatory behavior. 
 \end{remark}
  \section{Examples}  We say a sequence is admissible if it satisfies Assumptions (1-3).     The space of admissible sequences forms a vector space of infinite dimension graded by $s_0$ and $\sigma_0.$  Furthermore any sequence of finite support rests inside this space.  Multiplying any sequence $m^{s}$ shifts the $s_0$ grading. While it is not known which sequences are admissible and which are not, we can demonstrate this space is large.
\begin{lemma}\label{arethmeticzeta} For  every $c>\sigma>\sigma_0$ there exists constants, $C_1,C_2>0$ so that for every $1\le h\le k$  \[|\zeta(s,h/k)(s-1)|\le C_1(1+|t|)^{C_2}k^{\max(0,\sigma)} \] where $\zeta(s,\nu)=\sum_{n=0}^\infty (n+\nu)^{-s}$ is the Hurwitz zeta function.  
\end{lemma}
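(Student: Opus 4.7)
The plan is to control the entire function $(s-1)\zeta(s, h/k)$ on the vertical strip $\sigma_0 \le \sigma \le c$ by bounding it on the two boundary lines and interpolating by a Phragm\'en--Lindel\"of argument.

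First I would estimate the right-hand edge $\sigma = c$, where we may assume $c > 1$ without loss of generality. The defining series converges absolutely and, splitting off the $n=0$ term, gives
\[
|\zeta(s, h/k)| \le (h/k)^{-c} + \sum_{n=1}^\infty n^{-c} \le k^c + \zeta(c),
\]
so $|(s-1)\zeta(s, h/k)| = O((1+|t|)\, k^c)$ uniformly in $1 \le h \le k$.

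Next I would dispatch the half-strip $\sigma_0 \le \sigma \le 0$ in one shot using Hurwitz's functional equation
\[
\zeta(s, h/k) = 2(2\pi)^{s-1}\Gamma(1-s) \sum_{n=1}^\infty \frac{\sin\bigl(2\pi n h/k + \pi s/2\bigr)}{n^{1-s}}.
\]
The sine factor is bounded by $e^{\pi|t|/2}$, the series by $\zeta(1-\sigma_0)$, and Stirling gives $|\Gamma(1-s)| = O((1+|t|)^{1/2-\sigma}\, e^{-\pi|t|/2})$. The two exponentials cancel, and the resulting estimate takes the shape $O((1+|t|)^{C_2})$ with no dependence on $h$ or $k$ whatsoever. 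Since $\max(0,\sigma)=0$ on this slab, the desired bound holds automatically, and in particular we have pinned down the line $\sigma=0$.

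Finally, on the remaining slab $0 \le \sigma \le c$, I would apply the Phragm\'en--Lindel\"of principle to the entire function $(s-1)\zeta(s,h/k)$, combining the bound $O((1+|t|)^{C_2})$ on $\sigma = 0$ with $O((1+|t|)\,k^c)$ on $\sigma = c$. Linear interpolation of the $k$-exponent gives $|(s-1)\zeta(s,h/k)| \le C_1(1+|t|)^{C_2}\, k^\sigma$ throughout this slab, which is exactly $k^{\max(0,\sigma)}$ for $\sigma \ge 0$. Splicing with the previous estimate produces the lemma. The main delicacy is checking the hypothesis of Phragm\'en--Lindel\"of: we need at-most-polynomial growth in $|t|$ uniformly in $k$ \emph{throughout} the strip, not merely on the two vertical edges. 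This again follows from the Hurwitz functional equation extended by analytic continuation into $0 \le \sigma \le c$, but it should be recorded explicitly before the interpolation is run.
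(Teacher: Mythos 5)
Your argument has a genuine gap, and it sits exactly where the proof needs to work hardest: the line $\sigma=0$, which you use as the left edge of the Phragm\'en--Lindel\"of interpolation. The Hurwitz functional equation expresses $\zeta(s,h/k)$ through the series $\sum_{n\ge 1}\sin(2\pi n h/k+\pi s/2)\,n^{s-1}$, which converges only for $\sigma<0$; after you pull the factor $e^{\pi|t|/2}$ out of the sine, what remains is $\zeta(1-\sigma)$, not $\zeta(1-\sigma_0)$, and the worst case on the slab $\sigma_0\le\sigma\le 0$ is $\sigma\to 0^-$, where $\zeta(1-\sigma)\to\infty$ (recall $\zeta$ decreases on $(1,\infty)$, so $\zeta(1-\sigma_0)$ is the \emph{best} case, not a uniform bound). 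At $\sigma=0$ itself the series diverges outright (take $h=k$: it reduces to a multiple of $\sum_n n^{-1+it}$). So the half-strip estimate is not uniform up to $\sigma=0$, and the crucial edge bound $O((1+|t|)^{C_2})$ with no $k$-dependence is never established. Retreating to a left edge $\sigma=\sigma''<0$ does not repair this: interpolating $k^{0}$ at $\sigma''$ against $k^{c}$ at $c$ yields the $k$-exponent $c(\sigma-\sigma'')/(c-\sigma'')$, which strictly exceeds $\max(0,\sigma)$, and the exponent as stated is what the corollary following the lemma needs in order to verify Assumption (3). The secondary issue you flag yourself is also real: the interior polynomial-growth hypothesis for Phragm\'en--Lindel\"of cannot be obtained ``from the functional equation extended by analytic continuation,'' since analytic continuation does not transport bounds; it would require an independent representation valid in $0\le\sigma\le c$, e.g.\ Euler--Maclaurin.

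Note that the ingredient needed to fill these holes already proves the lemma outright, with no interpolation at all: a bound for $\zeta(s,a)-a^{-s}$ uniform in $0<a\le 1$ on the whole strip. This is exactly how the paper argues: by Apostol's Theorem 12.23 there are constants $A,C_2$ depending only on $\sigma_0$ and $c$ with $|(s-1)(\zeta(s,h/k)-(h/k)^{-s})|\le A(1+|t|)^{C_2}$ for $\sigma_0\le\sigma\le c$, and the split-off term obeys $|(h/k)^{-s}|=(h/k)^{-\sigma}\le k^{\max(0,\sigma)}$ because $1\le h\le k$. Your treatment of the right edge (splitting off the $n=0$ term) is the germ of this argument; what is missing is uniform control of the remainder $\zeta(s,h/k)-(h/k)^{-s}$ throughout the strip rather than only at $\sigma=c$, and once you have that, Phragm\'en--Lindel\"of becomes superfluous.
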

\begin{proof} It is an immediate consequence of Theorem 12.23 of \cite[Page 270]{MR0422157} that there exists constants $A,C_2>1$ dependent solely on $\sigma_0$ and $c$ so that \[\left|(s-1)\left(\zeta(s,h/k)-\left(\frac{h}{k}\right)^{-s}\right)\right|\le A(1+|t|)^{C_2}. \]  The theorem follows immediately from this fact.

\end{proof}
\begin{corr}  The following are true \begin{enumerate}  \item Any periodic sequence is admissible with $s_0=1$ and $\sigma_0 \in (-1,0).$
\item The sequence $a_m=1_{a\mod j}$ is admissible for every $a,j\in \mathbb{N}$ with $s_0=1$ and $\sigma_0\in (-1,0).$
\item Any eventually periodic sequence is admissible with $s_0=1$ and $\sigma_0 \in (-1,0).$
\item If $a_m$ is an eventually periodic sequence then $a_m m^{s-1}$ for $s>0$ is admissible with $s_0=s$ and $\sigma_0\in (-1,0).$   \end{enumerate} \end{corr}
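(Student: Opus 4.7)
The plan is to reduce every claim to Lemma \ref{arethmeticzeta} by decomposing the twisted Dirichlet series $D_{h,k}(s)$ into a finite linear combination of Hurwitz zetas $\zeta(s, r/N)$; the analytic continuation, pole structure, and growth estimate will then all flow from the lemma.

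For Part (1), if $a_m$ has period $p$ then $a_m e^{2\pi i hm/k}$ is periodic in $m$ with period $N = \mathrm{lcm}(p,k)$, and grouping by residues modulo $N$ gives
\[
  D_{h,k}(s) = N^{-s}\sum_{r=1}^{N} a_r e^{2\pi i hr/k}\,\zeta\!\left(s,\tfrac{r}{N}\right).
\]
Absolute convergence for $\sigma > 1$ is immediate, the simple pole at $s = 1$ has residue $A_{h,k} = N^{-1}\sum_r a_r e^{2\pi i hr/k}$, and summing the term-wise bounds supplied by Lemma \ref{arethmeticzeta} together with $N \le pk$ produces a majorant of the form $C(p,\sigma_0,c)(1+|t|)^{C_2} k^{1+|\sigma|}$, verifying Assumptions (1)--(3) with $s_0 = 1$ and any $\sigma_0 \in (-1,0)$. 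Part (2) is the special case where $a_m$ is a period-$j$ indicator. For Part (3), an eventually periodic sequence splits as $a_m = b_m + c_m$ with $b_m$ periodic and $c_m$ of finite support; the series attached to $c_m$ is a finite exponential polynomial, hence entire with $k$-independent, polynomial-in-$t$ behaviour, so admissibility is inherited additively from the periodic piece at the shared value $s_0 = 1$.

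For Part (4), the substitution $\tilde{s}' = \tilde{s} - s + 1$ identifies $D^{\mathrm{new}}_{h,k}(\tilde{s}) = D^{\mathrm{old}}_{h,k}(\tilde{s} - s + 1)$, where $D^{\mathrm{old}}$ is the admissible series of Part (3). This instantly translates the pole at $s' = 1$ into a pole of $D^{\mathrm{new}}$ at $\tilde{s} = s$ with the same residue and shifts the half-plane of convergence in parallel, delivering Assumptions (1)--(2) with $s_0 = s$. Assumption (3) is the delicate point: the transferred bound carries exponent $1+\max(0,\, s-1-\tilde{\sigma})$ rather than the target $s+|\tilde{\sigma}|$; a direct exponent comparison shows these agree whenever $s \ge 1$, or whenever $\tilde{\sigma} \le s-1$, so the only residual case is $s < 1$ with $\tilde{\sigma} \in (s-1,\, 1-s)$. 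In that slab I would tighten the estimate by combining the $k$-free bound available from absolute convergence of $D^{\mathrm{new}}_{h,k}$ on $\tilde{\sigma} > s$ with a Phragm\'en--Lindel\"of interpolation across the critical strip, choosing $\sigma_0 \in (-1,0)$ so that the two regimes overlap.

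The main obstacle is exactly this last reconciliation: the argument shift alone overshoots the target $k$-exponent in a sub-convex strip whenever $s<1$, and one must anchor the interpolation on the absolutely convergent side to recover the bound. Everything else amounts to routine bookkeeping with finite sums of Hurwitz zetas.
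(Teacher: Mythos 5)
Parts (1)--(3) of your outline are exactly the paper's argument: expand $D_{h,k}$ into finitely many Hurwitz zeta functions (the paper uses modulus $jk$, you use $\mathrm{lcm}(p,k)$ --- immaterial), read off the continuation and the residue at $s=1$, obtain Assumption (3) from Lemma \ref{arethmeticzeta}, and handle eventual periodicity additively. That part is correct and matches the paper. For part (4) the paper offers only the remark that multiplying by $m^{s-1}$ shifts the variable, and you are right --- and more careful than the paper --- that for $0<s<1$ the shifted bound has $k$-exponent $1+\max(0,s-1-\sigma)$, which exceeds the required $s+|\sigma|$ precisely on the strip $\sigma\in(s-1,1-s)$; for $s\ge 1$ your argument (and the paper's) is complete.

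The gap is in your proposed repair of that strip. On $\sigma\in(s-1,0]$ the exponent demanded by Assumption (3) is sharp: already for $a_m\equiv 1$, $h=1$ one has $D_{1,k}(\sigma)=Li_{\sigma-s+1}(e^{2\pi i/k})\asymp \Gamma(s-\sigma)\,(2\pi/k)^{\sigma-s}$, i.e.\ of size exactly $k^{s-\sigma}=k^{s+|\sigma|}$, so there is zero slack to absorb. A Phragm\'en--Lindel\"of interpolation must anchor on a line $\Re\tilde s=a\le s-1$, where the best available (and true) bound is $k^{s-a}$, and on a line $\Re\tilde s=b>s$, where absolute convergence gives a $k$-free bound; the interpolated exponent at an interior $\sigma$ then exceeds $s-\sigma$ by $(\sigma-a)\frac{b-s}{b-a}>0$, and letting $b\downarrow s$ only reduces this to an $\epsilon$-loss with an $\epsilon$-dependent constant. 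Since no lossless, $k$-uniform anchor on the boundary line $\Re\tilde s=s$ comes from absolute convergence, convexity alone cannot produce the exact bound $C_1(1+|t|)^{C_2}k^{s+|\sigma|}$ on $(s-1,0]$, so the ``delicate point'' you flag is not actually settled by your plan. What does close it is an elementary sharpening of the very decomposition you already use: write $\zeta(w,r/N)=(r/N)^{-w}+E_r(w)$ with $|(w-1)E_r(w)|\le A(1+|t|)^{C_2}$, and keep the $r$-dependence of the main terms, so that for $\sigma'=\sigma-s+1$ in compact subsets of $(0,1)$ one has $\sum_{r\le N}(r/N)^{-\sigma'}=O(N)$ and hence, after the prefactor $N^{-\sigma'}$, the exponent $1-\sigma'=s-\sigma\le s+|\sigma|$ (equivalently, invoke the Hurwitz formula for $Li_w$ at roots of unity). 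With that substitution in place of the interpolation step, your outline becomes a complete proof; as written, the key case $0<s<1$ rests on a step that cannot deliver the required exponent.
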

\begin{proof}  Consider any periodic function$ \chi(m)$ on $\mathbb{Z}_j$ twisted with an additive character $\psi(m)=e^{\frac{2\pi i m h}{k}}$ on $\mathbb{Z}_k.$ Without loss of generality assume $|\chi(m)|\le M.$  The resulting function $\chi(m)\psi(m)$ is periodic with period $jk.$ We then can write \[D_{h,k}(s) = \sum_{r=1}^{jk}\frac{\chi(r)\psi(r)}{(jk)^s}\zeta(s,r/jk).  \]

It is well known $\zeta(s,\nu)$ continues analytically to the entire complex plane with at most a simple pole at $s=1$ with residue $1,$ and  $D_{h,k}(\sigma)\le M \pi^2/6$ for $\sigma>c>2,$ we need only demonstrate that the bound in Assumption (3) holds. This follows naturally from Lemma \ref{arethmeticzeta}

\[|D_{h,k}(\sigma+it)| \le M C_1(jk) (1+|t|)^{C_2} (jk)^{\max(-\sigma,0)}\le \hat{C_1} (1+|t|)^{C_2} k^{1+|\sigma|}. \]

Parts (2), (3)  now follow by linearity. Part (4) follows by observing that multiplying by $m^s$ shifts $\sigma \to \sigma +s.$

\end{proof} Admissible sequences are admissible in the sense that they produce the proper asymptotic approximation near roots of unity.  One requires assumption (4), (5) to hold as well in order to compute the Fourier coefficients.  We do conjecture that Assumptions (4) and (5) can be relaxed and/or possibly replaced with $a_m$ is not finitely supported.

    \subsection{The Constant Sequence}
      Consider the sequence $a_m=1$ identically.  In this case, $p_w(n)=Q_n(z)$ counts the total number of partitions of $n$ weighed so that each partition is counted with weight $z^{l(\lambda)}$ where $l(\lambda)$ counts the total number of summands for a partition of $n.$  These polynomials have been called the Partition Polynomials. Wright developed detailed asymptotics for $p_w(n)$ for  $z\in (0,1)$ \cite{Wright01011934} and the roots of these polynomials were studied in a work of Boyer and Goh \cite{BoyerPartitionPolynomials}.
      
Since $a_m$ is periodic with period 1, it is admissible with $s_0=1.$ After working through all the details, one will obtain  \[L_{h,k}(z)=  \frac{1}{k}\sqrt{ Li_2(z^k)}.\]   With $D= \mathbb{D}\setminus \{0\}$  we can define our phases, $R(h,k).$  While finding which $R(h,k)$ are nonempty is a nontrivial problem, it has been solved \cite{BoyerPartitionPolynomials}.  For this choice of $D$ there are 3 nonempty phases, $R(1,1),$  $R(1,2),$ and $R(1,3).$ The approximations given in \cite{BoyerPartitionPolynomials} follow naturally.

  \subsection{The Power Function Sequence}
A natural extension of the constant sequence is the power function sequence.  Consider the sequence $a_m=m^{s_0-1}$ with $s_0>0.$  When $s_0=1$ we obtain the Partition Polynomials (see above).  When $s_0=2,$ we obtain the Plane Partition Polynomials  \cite{MR2880680, ParryBoyerPhase}.

This sequence is admissible as it is a periodic sequence multiplied by a power function.  Constructing \[L_{h,k}(z)=  \frac{1}{k}\sqrt{ \Gamma(s_0+1)Li_{s_0+1}(z^k)}\]   with $D= \mathbb{D}\setminus \{0\}$  we can define our phases, $R(h,k).$ Assumptions (4) is an application of Diophantine approximation and assumption (5) is trivial.  
While one can prove that all but finitely many nonempty phases are empty for a given $s_0,$ computing the $R(h,k)$ analytically is a nontrivial problem and these sets are poorly understood.  What can be said comes from the work in \cite{ZeroAttractoram} which follows from generalizing the work in \cite{ParryBoyerPhase} and applying some numerical techniques. For example, one can prove the existence of constants $c_1\approx 2.148\dots$ and $c_2\approx 1.03\dots$ so that for $c_1>s_0>c_2$ only $R(1,1)$ and $R(1,2)$ are nonempty and for $s_0>c_1,$ $R(1,1)$ is the only nonempty phase.

If we let $D=(-1,1),$  the problem simplifies and one can show that there exists a $x^*(s_0)\in[-1,0]$ so that $R(1,1) =(0,1)\cup (0,x^*(s_0))$ and $R(1,2)=(-1,x^*(s_0)).$  Since $D_{1,1}(0)=\zeta(1-s_0)$ and $D_{1,2}(0) = (2^{s_0}-1)\zeta(1-s_0)$ we can compute $\omega_{1,1,n}(z)$ and $\omega_{1,2,n}(z)$ by Fourier inversion.
 \begin{theorem}  Suppose $X\subset (0,1)$ is compact and $x\in X$ then 
\[Q_n(x) \thicksim (1-x)^{-\zeta(1-s_0)}\sqrt{\frac{L_{1,1}(x)}{2\pi (s_0+1)n^\frac{s_0+2}{s_0+1}}}\exp\left(\frac{s_0+1}{s_0} n^\frac{s_0}{s_0+1} L_{1,1}(x)\right)\] where $L_{1,1}(x)=\sqrt[s_0+1]{\Gamma(s_0+1)Li_{s_0+1}(x)}.$\end{theorem}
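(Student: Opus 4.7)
The plan is to apply Theorem \ref{FourierCoeff} with $(p,q)=(1,1)$, using the assertion $X \subset (0,1) \subset R(1,1)$ established in the discussion of the power function sequence. Two computations drive the argument: identifying the equivalence class $[L_{1,1}(z)]$ on $D=(-1,1)$ so that the sum over $(h,k)$ in Theorem \ref{FourierCoeff} collapses to a single term, and evaluating the prefactor $\omega_{1,1,n}(x)$ in closed form.

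For the equivalence class, the standard Hurwitz decomposition gives $D_{h,k}(s) = k^{s_0-1-s}\sum_{r=1}^{k} e^{2\pi i rh/k}\zeta(s-s_0+1,\, r/k)$, so the residue at $s=s_0$ is $A_{h,k} = \mathbf{1}_{k \mid h}$, and Fourier inversion yields $c(j) = 1/k$. Substituting into the formula for $\Psi_{h,k}$ and invoking the identity $\sum_{j=0}^{k-1}Li_s(e^{2\pi ij/k}z) = k^{1-s}Li_s(z^k)$ (valid after the reindexing $j \mapsto hj$ since $\gcd(h,k)=1$) produces the closed form $L_{h,k}(z) = \frac{1}{k}\sqrt[s_0+1]{\Gamma(s_0+1)Li_{s_0+1}(z^k)}$. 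For $x \in (0,1)$ both sides are positive reals, and a term-by-term comparison of $Li_{s_0+1}(x) = \sum_{n \ge 1} x^n/n^{s_0+1}$ against the sparser subseries $k^{-(s_0+1)}Li_{s_0+1}(x^k) = \sum_{n \ge 1} x^{kn}/(kn)^{s_0+1}$ shows $L_{1,1}(x) > L_{h,k}(x)$ strictly for every $(h,k) \neq (1,1)$. Hence $\Re L_{h,k}$ is not identical to $\Re L_{1,1}$ on $D$, so $[L_{1,1}] = \{L_{1,1}\}$ and the sum in Theorem \ref{FourierCoeff} reduces to the single term $\omega_{1,1,n}(x)\tilde{I}_{1,1,n}(x)$.

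For the prefactor, with $k=1$ the product in $\omega_{h,k,n}$ collapses to a single $j=0$ factor and $e^{-2\pi in}=1$, so $\omega_{1,1,n}(x) = (1-x)^{-b(0)}$, where $b(0) = D_{1,1}(0) = \zeta(1-s_0)$ via $D_{1,1}(s) = \zeta(s-s_0+1)$. Since $w^{s_0}s_0\Psi_{1,1}(x,w) = \Gamma(s_0+1)Li_{s_0+1}(x) > 0$ on $X$, the first case of Theorem \ref{FourierCoeff} applies, and assembling the computed pieces yields exactly the claimed asymptotic. The main obstacle is the collapse of the equivalence class, which rests on the polylogarithm subseries comparison; once secured, the remainder is routine substitution.
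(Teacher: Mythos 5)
Your proposal is correct and follows essentially the same route as the paper: verify admissibility via the Hurwitz-zeta decomposition, compute $A_{h,k}$, $c(j)$, and hence $L_{h,k}(z)=\frac{1}{k}\sqrt[s_0+1]{\Gamma(s_0+1)Li_{s_0+1}(z^k)}$, observe that on $(0,1)$ the positive-term subseries comparison makes $(1,1)$ the unique dominant (major) arc so the expansion collapses to one term, evaluate $\omega_{1,1,n}(x)=(1-x)^{-\zeta(1-s_0)}$ from $D_{1,1}(0)=\zeta(1-s_0)$, and apply the first case of Theorem \ref{FourierCoeff}. The only detail the paper notes that you leave implicit is the (here trivial) verification of Assumptions (4) and (5) on compact $X\subset(0,1)$, which does not affect correctness.
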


 \begin{theorem}There exists an $x^*(s_0)\in [-1,0]$ so that if $X\subset (-1,x^*(s_0))$ is compact and $x\in X$ then 
\[Q_n(x) \thicksim(-1)^n(1-x)^{-B}(1+x)^{-A} \sqrt{\frac{L_{1,2}(x)}{2\pi (s_0+1)n^\frac{s_0+2}{s_0+1}}}\exp\left(\frac{s_0+1}{s_0} n^\frac{s_0}{s_0+1} L_{1,2}(x)\right)\]
where 
 \[A= \zeta(1-s_0)(1-2^{s_0-1}), \quad B =  \zeta(1-s_0)2^{s_0-1}, \quad L_{1,2}(x)=\frac{1}{2} \sqrt[s_0+1]{\Gamma(s_0+1)Li_{s_0+1}(x^2)}.\] \end{theorem}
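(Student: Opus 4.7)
The proof is a direct application of Theorem~\ref{FourierCoeff} with identifying pair $(p,q)=(1,2)$, combined with a verification that $(-1,x^*(s_0))\subset R(1,2)$ and an explicit computation of the constants $A$, $B$, and $L_{1,2}$.

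First I would establish the existence of $x^*(s_0)\in[-1,0]$ with $(-1,x^*(s_0))\subset R(1,2)$. Assumption~(5) forces $\Re L_{h,k}(x)$ to vanish uniformly on compact subsets of $(-1,0)$ as $k\to\infty$, so only finitely many coprime pairs need to be ruled out, and among them the only serious competitor to $(1,2)$ is $(1,1)$. Using the principal branch,
\[\Re L_{1,1}(x)=\bigl[\Gamma(s_0+1)|Li_{s_0+1}(x)|\bigr]^{1/(s_0+1)}\cos\!\bigl(\tfrac{\pi}{s_0+1}\bigr),\qquad \Re L_{1,2}(x)=\tfrac{1}{2}\bigl[\Gamma(s_0+1)Li_{s_0+1}(x^2)\bigr]^{1/(s_0+1)}.\]
When $2(1-2^{-s_0})^{1/(s_0+1)}\cos(\pi/(s_0+1))<1$ the inequality $\Re L_{1,2}(-1)>\Re L_{1,1}(-1)$ holds and by continuity persists on a left neighborhood of $-1$, defining $x^*(s_0)>-1$; otherwise one takes $x^*(s_0)=-1$ and the statement is vacuously true.

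Once $X\subset R(1,2)$, the expansion $Q_n(x)=\sum_{(h,k)}\omega_{h,k,n}(x)\tilde I_{h,k,n}(x)$ collapses to its single $(h,k)=(1,2)$ term. I would then compute $\omega_{1,2,n}(x)$ by inverting the Fourier relations from the prescribed values $D_{0,2}(0)=\zeta(1-s_0)$ and $D_{1,2}(0)=(2^{s_0}-1)\zeta(1-s_0)$, yielding $b(0)=B$ and $b(1)=A$; together with the $e^{-\pi i n}=(-1)^n$ prefactor in the definition of $\omega_{h,k,n}$, this gives $\omega_{1,2,n}(x)=(-1)^n(1-x)^{-B}(1+x)^{-A}$. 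The closed form for $L_{1,2}(x)$ is obtained analogously: Fourier-inverting the residues $A_{0,2}=1$ and $A_{1,2}=0$ yields $c(0)=c(1)=1/2$, and the classical identity $Li_s(z)+Li_s(-z)=2^{1-s}Li_s(z^2)$ collapses $\Psi_{1,2}(x,w)$ to the stated form.

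Finally, since $Li_{s_0+1}(x^2)>0$ for $x\in(-1,0)$, the quantity $w^{s_0}s_0\Psi_{1,2}(x,w)$ is strictly positive real, so Theorem~\ref{FourierCoeff} applies in its first (non-oscillatory) case and delivers the stated asymptotic for $\tilde I_{1,2,n}(x)$. Multiplying by $\omega_{1,2,n}(x)$ assembles the claimed formula, with the $(-1)^n$ arising entirely from $\omega$. The main obstacle is the phase verification of the first paragraph---precise determination of $x^*(s_0)$ requires finer analysis of the equation $\Re L_{1,1}(x)=\Re L_{1,2}(x)$ on $(-1,0)$---while the remaining steps are routine Fourier bookkeeping.
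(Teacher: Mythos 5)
Your proposal is correct and follows essentially the same route as the paper: verify the power-function sequence is admissible, identify the phase $R(1,2)$ on $D=(-1,1)$ via the competition between $\Re L_{1,1}$ and $\Re L_{1,2}$ (giving $x^*(s_0)$), Fourier-invert $D_{1,1}(0)=\zeta(1-s_0)$ and $D_{1,2}(0)=(2^{s_0}-1)\zeta(1-s_0)$ to get $\omega_{1,2,n}(x)=(-1)^n(1-x)^{-B}(1+x)^{-A}$, collapse $\Psi_{1,2}$ by the duplication identity to get $L_{1,2}$, and apply Theorem \ref{FourierCoeff} in its non-oscillatory case since $\Gamma(s_0+1)Li_{s_0+1}(x^2)>0$. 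Your dismissal of the arcs with $k\ge 3$ is stated rather than proved (a short bound $\Re L_{h,k}(x)\le \tfrac{2}{k}\Re L_{1,2}(x)$ for even $k\ge 4$ and the analogous estimate with the factor $\cos(\pi/(s_0+1))$ for odd $k\ge 3$ closes it), but the paper itself only asserts this phase determination, so your level of detail matches its treatment.
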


\begin{theorem}\label{PowerfunctionNegativeAxis}  There exists an $x^*(s_0)\in [-1,0]$ so that if $X\subset (x^*(s_0),0)$ is compact and $x\in X$ then 
\[Q_n(x) \thicksim 2(1-x)^{-\zeta(1-s_0)}\Re \left[\sqrt{\frac{L_{1,1}(x)}{2\pi (s_0+1)n^\frac{s_0+2}{s_0+1}}}\exp\left(\frac{s_0+1}{s_0} n^\frac{s_0}{s_0+1} L_{1,1}(x)\right)\right]\] where $L_{1,1}(x)=\sqrt[s_0+1]{\Gamma(s_0+1)Li_{s_0+1}(x)}.$\end{theorem}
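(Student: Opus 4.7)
The plan is to specialize Proposition \ref{Factorization} and Theorem \ref{FourierCoeff} to the power function sequence $a_m=m^{s_0-1}$ on a compact $X\subset(x^*(s_0),0)\subset R(1,1)$. First I would show that the expansion
\[Q_n(z)=\sum_{(h,k):\,L_{h,k}(z)\in[L_{1,1}(z)]}\omega_{h,k,n}(z)\,\tilde{I}_{h,k,n}(z)\]
collapses to a single term at $(h,k)=(1,1)$. Since $L_{h,k}(z)=\frac{1}{k}\sqrt[s_0+1]{\Gamma(s_0+1)Li_{s_0+1}(z^k)}$ depends only on $k$, and since on $(0,1)\subset D$ the value is real positive with $L_{h,k}(x)<L_{1,1}(x)$ strictly whenever $k>1$ (using $Li_{s_0+1}(x^k)<Li_{s_0+1}(x)$ and $k^{-1}<1$), the equivalence class $[L_{1,1}]$ contains only $L_{1,1}$ itself; so no secondary arc contributes.

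Second, I would identify which branch of Theorem \ref{FourierCoeff} applies. A direct computation gives $w^{s_0}s_0\Psi_{1,1}(x,w)=\Gamma(s_0+1)Li_{s_0+1}(x)$. For $x\in(-1,0)$ the alternating series $Li_{s_0+1}(x)=\sum_{n\ge 1}x^n/n^{s_0+1}$ is strictly negative (the first term dominates), so we land in the second case of the theorem, producing the $2\Re[\cdots]$ factor. Assumptions (4)–(5) of Definition \ref{DefPhase} follow routinely on $X$: for (5), the crude bound $|L_{h,k}(x)|\le k^{-1}(C_X|x|^k)^{1/(s_0+1)}$ vanishes uniformly in $X$ as $k\to\infty$; for (4), compactness together with the identity $\Re L_{1,1}(x)=|\Gamma(s_0+1)Li_{s_0+1}(x)|^{1/(s_0+1)}\cos(\pi/(s_0+1))>0$ gives the required strict positivity in the nontrivial regime, which is precisely the regime where $x^*(s_0)<0$ (i.e.\ where $\cos(\pi/(s_0+1))>0$).

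Third, I would evaluate $\omega_{1,1,n}(x)$. With $k=1$ the only Fourier index is $j=0$, so $b(0)=D_{1,1}(0)$. For the power function sequence $D_{1,1}(s)=\zeta(s-s_0+1)$, whence $D_{1,1}(0)=\zeta(1-s_0)$. Thus $\omega_{1,1,n}(x)=e^{-2\pi in}(1-x)^{-\zeta(1-s_0)}=(1-x)^{-\zeta(1-s_0)}$. Multiplying by the asymptotic for $\tilde{I}_{1,1,n}(x)$ from the second case of Theorem \ref{FourierCoeff} produces the claimed expression.

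The main obstacle is the reduction to a single dominant arc: once the equivalence class $[L_{1,1}]$ is seen to be trivial, the remainder is direct bookkeeping. The existence of $x^*(s_0)$ is inherited from the preceding phase discussion, where $x^*(s_0)$ is characterized as the crossover at which $\Re L_{1,2}(x)$ overtakes $\Re L_{1,1}(x)$; in the degenerate case $s_0\le 1$ one simply has $x^*(s_0)=0$ and the statement is vacuous, consistent with the fact that $\cos(\pi/(s_0+1))\le 0$ there kills the positivity in Assumption (4).
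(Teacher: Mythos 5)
Your proposal is correct and follows essentially the same route as the paper: the paper likewise obtains this theorem by specializing Theorem \ref{FourierCoeff} to $a_m=m^{s_0-1}$ with $(p,q)=(1,1)$, noting $D_{1,1}(s)=\zeta(s+1-s_0)$ so that $\omega_{1,1,n}(x)=(1-x)^{-\zeta(1-s_0)}$, and invoking the oscillatory branch since $\Gamma(s_0+1)Li_{s_0+1}(x)\le 0$ on the negative axis. The points you fill in (collapse of the class $[L_{1,1}]$ to a single arc, verification of Assumptions (4)--(5) on a compact $X$, and the crossover characterization of $x^*(s_0)$) are treated at the same level of detail, or less, in the paper itself, so there is no substantive divergence.
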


  \subsection{Arithmetic Progression Indicator Sequences}

Consider partitions whose parts all lie in an arithmetic progression, say $\lambda_i=a\mod j$ and $a,j$ are relatively prime positive integers with $j>1.$  Consider $Q_n(z)=p_w(n)$ to be the weighted count of all such partitions of $n$ where each partition is counted with weight $z^{l(\lambda)}.$ 

  These polynomials are generated by the sequence $a_m$ which is one only if $m=a \mod j$ and zero otherwise.  This sequence is admissible and \[L_{h,k}(z)= \frac{(k,j)}{k} \sqrt{\frac{1}{j} Li_{2}\left(z^{\frac{k}{(k,j)}}e^{\frac{2\pi i ha}{(k,j)}}\right)}\] for $(h,k)=1$ and undefined  otherwise.     With $D= \mathbb{D}\setminus \{0\}$  we can define our phases, $R(h,k).$ When $j>2,$ several reductions one can prove
\begin{theorem}  Suppose $X\subset \{z\in \mathbb{D}: |\arg z| < \frac{\pi}{j} \}$ is compact and $z\in X$ then 
\[Q_n(z) \thicksim (1-z)^\frac{2a-j}{2j} \sqrt[4]{\frac{Li_2(z)}{16j \pi^2n^3}}\exp\left( 2\sqrt{\frac{nLi_2(z)}{j}} \right).\]  \end{theorem}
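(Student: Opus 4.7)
The plan is to apply Theorem \ref{FourierCoeff} with $(p,q)=(1,1)$ and $s_0=1$. Admissibility of the sequence $a_m=1_{m\equiv a\bmod j}$ follows from part (2) of the preceding corollary, so hypotheses (1)--(3) hold; hypotheses (4) and (5) I expect to follow from a Diophantine argument essentially identical to the one cited for the power function sequence. Three explicit calculations then remain: locate $X$ within the correct phase, compute $\omega_{1,1,n}(z)$, and assemble the asymptotic.

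For the phase analysis I must show $X\subset R(1,1)$ with $[L_{1,1}(z)]$ a singleton class. Since $L_{1,1}(z)=\sqrt{Li_2(z)/j}$ and, for $(h,k)\neq(1,1)$ with $\gcd(h,k)=1$, $L_{h,k}(z)=\frac{(k,j)}{k}\sqrt{Li_2(z^{k/(k,j)}e^{2\pi iha/(k,j)})/j}$, I would compare $\Re L_{h,k}$ with $\Re L_{1,1}$ case by case. When $k/(k,j)\geq 2$ the inner polylog argument has modulus at most $|z|^2$, and the scalar factor $(k,j)/k\leq 1/2$ combined with $|Li_2(w)|\leq Li_2(|w|)$ forces strict dominance by $\Re L_{1,1}$. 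When $k/(k,j)=1$ necessarily $k\mid j$ and $k\geq 2$; since $\gcd(h,k)=1$ and $\gcd(a,j)=1$ imply $\gcd(a,k)=1$, the factor $e^{2\pi iha/k}$ is a nontrivial $k$th root of unity, so $\arg(ze^{2\pi iha/k})$ differs from $\arg z$ by a nonzero integer multiple of $2\pi/k\geq 2\pi/j$. The hypothesis $|\arg z|<\pi/j$ then forces $ze^{2\pi iha/k}$ strictly outside this sector, and since $\Re Li_2(re^{i\theta})=\sum_n r^n\cos(n\theta)/n^2$ is strictly maximized at $\theta=0$ among fixed-modulus points, one obtains $\Re L_{h,k}(z)<\Re L_{1,1}(z)$ uniformly on the compact set $X$. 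The same sector condition keeps $Li_2(z)$ bounded away from the nonpositive reals, so the first asymptotic form of Theorem \ref{FourierCoeff} is the relevant one.

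For $\omega_{1,1,n}(z)$ the trivial group $\mathbb{Z}_1$ collapses the Fourier expansion to the single term $\omega_{1,1,n}(z)=(1-z)^{-b(0)}$ with $b(0)=D_{1,1}(0)$. Writing $D_{1,1}(s)=j^{-s}\zeta(s,a/j)$ and using $\zeta(0,\nu)=\tfrac12-\nu$ gives $D_{1,1}(0)=\tfrac{j-2a}{2j}$, yielding the prefactor $(1-z)^{(2a-j)/(2j)}$. Substituting $s_0=1$ and $L_{1,1}(z)=\sqrt{Li_2(z)/j}$ into Theorem \ref{FourierCoeff} produces the exponent $2\sqrt{nLi_2(z)/j}$ and the prefactor $\sqrt{L_{1,1}(z)/(4\pi n^{3/2})}=\sqrt[4]{Li_2(z)/(16j\pi^2n^3)}$, completing the formula. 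The main obstacle is the phase-dominance step of the previous paragraph: the case $k/(k,j)=1$ uses the sharp sector width $\pi/j$, and ensuring the dominance remains uniform as $k\to\infty$ is precisely the role played by hypothesis (5) of Theorem \ref{FourierCoeff}, which reduces the infinitely many comparisons to a finite check.
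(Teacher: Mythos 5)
Your overall reduction is the one the paper intends: the sequence is admissible, $L_{1,1}(z)=\sqrt{Li_2(z)/j}$, $\omega_{1,1,n}(z)=(1-z)^{-D_{1,1}(0)}$ with $D_{1,1}(0)=\zeta(0,a/j)=(j-2a)/(2j)$, and substituting $s_0=1$ into Theorem \ref{FourierCoeff} gives exactly the stated prefactor and exponential. The gap is in the step you yourself flag as the main obstacle, the phase containment $X\subset R(1,1)$, and as written both cases of your comparison are unsound. In the case $k\mid j$, $k\ge 2$, you argue from the fact that $\Re Li_2(re^{i\theta})$ is maximized at $\theta=0$ to the conclusion $\Re L_{h,k}(z)<\Re L_{1,1}(z)$. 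But $\Re\sqrt{w}=\sqrt{(|w|+\Re w)/2}$ depends on $|w|$ as well as $\Re w$, and the comparison here is not between $\theta=0$ and $\theta\neq 0$ but between two generally nonzero arguments $\arg z$ and $\arg z+2\pi m/k$. What you actually need is that $\theta\mapsto |Li_2(re^{i\theta})|+\Re Li_2(re^{i\theta})$ (equivalently $\Re\sqrt{Li_2(re^{i\theta})}$) is strictly decreasing in $|\theta|$ on $[0,\pi]$ for each fixed $r\in(0,1)$; monotonicity of $\Re Li_2$ alone does not give this, since $\partial_\theta|Li_2|$ changes sign. This monotonicity is exactly the kind of nontrivial polylogarithm inequality that underlies the phase analysis (cf. the Boyer--Goh treatment of the constant sequence), and it must be proved, not inferred.

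In the case $k/(k,j)\ge 2$ your crude bound reduces the claim to the pointwise inequality $\Re\sqrt{Li_2(z)}>\tfrac12\sqrt{Li_2(|z|^2)}$ on the sector, which you assert but do not verify, and which is precisely where the hypothesis $j>2$ (i.e.\ sector half-width at most $\pi/3$) enters. For $j=2$ the inequality genuinely fails: near $z=\pm i$ one has $\Re\sqrt{Li_2(z)/2}\approx 0.43$ while $\tfrac12\sqrt{Li_2(|z|^2)/2}\approx 0.45$, reflecting the fact that for the progression $a\bmod 2$ the phase $R(1,4)$ intrudes near $\arg z=\pm\pi/2$ --- this is why the paper states a separate theorem for $j=2$. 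Since your argument nowhere uses $j>2$ beyond the sector width, as written it would ``prove'' the false $j=2$ version of the statement; you need to isolate and prove the sector inequality for $j\ge 3$ (checking the extremal ray $\arg z=\pi/3$, $|z|\to 1$, where the margin is smallest), and likewise supply the monotonicity lemma above, before the application of Theorem \ref{FourierCoeff} and the subsequent computations (which are correct) complete the proof.
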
 \noindent while $Q_n(e^{\frac{2\pi i a}{j}}z) =e^{\frac{2\pi i n}{j}}Q_n(z).$ When $j=2$ we obtain a slightly different reduction.     \begin{theorem}  Suppose $X\subset R(1,1)$ is compact and $z\in X$ then 
\[Q_{n}(z) \thicksim \sqrt[4]{\frac{Li_2(z)}{32 \pi^2n^3}}\exp\left( \sqrt{2nLi_2(z)} \right).\]
Suppose $X\subset R(1,2)$ is compact and $z\in X$ then 
\[Q_n(z) \thicksim (-1)^n \sqrt[4]{\frac{Li_2(-z)}{32 \pi^2n^3}}\exp\left( \sqrt{2nLi_2(-z)} \right).\]
Suppose $X\subset R(1,4)$ is compact and $z\in X$ then 
\[Q_n(z) \thicksim \left(i^{-n}\sqrt[4]{\frac{z-i}{z+i}}+i^{n}\sqrt[4]{\frac{z+i}{z-i}}\right) \sqrt[4]{\frac{Li_2(-z^2)}{128 \pi^2n^3}}\exp\left( \sqrt{\frac{nLi_2(-z^2)}{2}} \right).\]
\end{theorem}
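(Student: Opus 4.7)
The plan is to apply Theorem \ref{FourierCoeff} together with the expansion $Q_n(z)=\sum_{\{(h,k):\,L_{h,k}(z)\in[L_{p,q}(z)]\}}\omega_{h,k,n}(z)\tilde{I}_{h,k,n}(z)$, phase by phase. Admissibility with $s_0=1$ and the formula for $L_{h,k}(z)$ are already established, so the remaining work is (i) to identify the pairs $(h,k)$ that lie in the dominant equivalence class of each phase and (ii) to compute $\omega_{h,k,n}(z)$ from Fourier inversion of $D_{h,k}(0)$. Since $(k,2)=2$ for even $k$, the formula for $L_{h,k}$ collapses to $L_{h,k}(z)=\tfrac{2}{k}\sqrt{\tfrac12 Li_2(-z^{k/2})}$, which is independent of $h$. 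In particular $L_{1,4}(z)=L_{3,4}(z)$, so the $R(1,4)$ sum has two terms, whereas $R(1,1)$ and $R(1,2)$ each have only one.

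The relevant Dirichlet values are the twisted series $\sum_{m\text{ odd}}e^{2\pi ihm/k}m^{-s}$. For $k\in\{1,2\}$ this reduces to $\pm(1-2^{-s})\zeta(s)$, which vanishes at $s=0$, so $b(j)\equiv 0$ and hence $\omega_{1,1,n}(z)=1$ and $\omega_{1,2,n}(z)=e^{-i\pi n}=(-1)^n$. For $k=4$ one finds $D_{0,4}(0)=D_{2,4}(0)=0$, $D_{1,4}(0)=iL(0,\chi_4)=i/2$, and $D_{3,4}(0)=-i/2$, where $\chi_4$ is the nontrivial character mod $4$; Fourier inversion on $\mathbb{Z}_4$ then gives $b(0)=b(2)=0$, $b(1)=1/4$, $b(3)=-1/4$, and therefore
\[\omega_{1,4,n}(z)=i^{-n}\left(\frac{1+iz}{1-iz}\right)^{1/4},\qquad \omega_{3,4,n}(z)=i^{n}\left(\frac{1-iz}{1+iz}\right)^{1/4}.\]
Substituting $s_0=1$, the corresponding $L_{h,k}$, and these $\omega$'s into the first form of $\tilde{I}_{h,k,n}$ in Theorem \ref{FourierCoeff} and simplifying the exponent $2\sqrt{n}\,L_{h,k}(z)$ and the prefactor $\sqrt{L_{h,k}(z)/(4\pi n^{3/2})}$ to the stated fourth-root form yields each asymptotic, the $R(1,4)$ case being obtained by adding the $(1,4)$ and $(3,4)$ contributions, which share a common $\tilde{I}$ since $L_{1,4}=L_{3,4}$.

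The principal obstacle is branch bookkeeping in the $R(1,4)$ case. A direct computation gives $(1+iz)(z+i)/[(1-iz)(z-i)]=-1$, so $\bigl((1+iz)/(1-iz)\bigr)^{1/4}$ and $\bigl((z-i)/(z+i)\bigr)^{1/4}$ differ by a fourth root of $-1$; this factor must be absorbed into a consistent choice of branch for $\sqrt[4]{(z\pm i)/(z\mp i)}$ on compact $X\subset R(1,4)$, and the branches for $(1,4)$ and $(3,4)$ must be coordinated, via the definition of $R(1,4)$ through $\Psi_{1,4}$ together with the $h\mapsto 3h\pmod 4$ symmetry, so that the two contributions genuinely add rather than partially cancel. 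Once these branches are pinned down, the remaining verification is routine $n$-power algebra identical to the $R(1,1)$ and $R(1,2)$ cases.
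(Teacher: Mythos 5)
Your proposal is correct and is essentially the paper's own (implicit) argument: the theorem sits in the Examples section with no separate proof, the intended derivation being exactly your application of Theorem \ref{FourierCoeff} with $s_0=1$, the identification of the contributing pairs $(1,1)$, $(1,2)$ and $\{(1,4),(3,4)\}$ via $L_{1,4}=L_{3,4}$, and the Fourier-inversion computation $D_{1,4}(0)=i/2$, $b(1)=1/4$, $b(3)=-1/4$ yielding your $\omega_{h,k,n}$; the exponent and prefactor algebra also checks out. The branch point you flag is genuine but is a matter of notation rather than a gap: since $1+iz=i(z-i)$ and $1-iz=-i(z+i)$, your expressions $i^{\mp n}\bigl((1\pm iz)/(1\mp iz)\bigr)^{1/4}$ (principal branch, analytic on $\mathbb{D}$ because $(1+iz)/(1-iz)$ has positive real part there) agree with the stated $i^{\mp n}\sqrt[4]{(z\mp i)/(z\pm i)}$ only for a suitable non-principal determination of the latter, whose principal branch is cut along the imaginary diameter passing through $R(1,4)$, so the theorem's fourth roots must be read with your branch convention and no cancellation between the $(1,4)$ and $(3,4)$ terms occurs.
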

\begin{remark}
The fact that $R(1,4)$ exists is unique to every other arithmetic progression.  It leads one to ask whether there is a combinatorial reason for $R(1,4)$'s existence.  
\end{remark}
 
 \section{Proof of Proposition \ref{Factorization}}
\noindent Start by expanding
\begin{equation}\label{expansion} \ln(P(z,e^{2\pi i\frac{h}{k} -w} )) = \sum_{l=1}^\infty \sum_{m=1}^\infty \frac{z^l}{l} a_m e^{2\pi i \frac{hlm}{k}} e^{-w lm}. \end{equation} Now apply the Cahen-Mellin integral for sufficiently large $c>0$  \[ e^{- w lm} = \frac{1}{2\pi i} \int_{c-i\infty}^{c+i\infty} \Gamma(s) (w lm)^{-s} ds. \]  We plug this into Equation \ref{expansion} and rearrange the terms.\begin{align*} \ln P(z,e^{2\pi i\frac{h}{k} -w}) &= \sum_{l=1}^\infty \sum_{m=1}^\infty \frac{z^l}{l} a_m e^{2\pi i \frac{hlm}{k}} \frac{1}{2\pi i} \int_{c-i\infty}^{c+i\infty} \Gamma(s) ( l m w)^{-s} ds. \\
&=\frac{1}{2\pi i} \int_{c-i\infty}^{c+i\infty} \Gamma(s) \sum_{l=1}^\infty \sum_{m=1}^\infty \frac{z^l a_m e^{2\pi i \frac{hlm}{k}} }{l^{s+1}m^{s}} w^{-s} ds. \end{align*}We now sum by letting $l=nk+r$ where $n\in \mathbb{N}\cup \{0\}$ and $1\le r\le k.$  Dividing top and bottom of our fraction by $k^{s+1}$ we can observe the Lerch phi function plays a natural role in approximating $P(z,q)$.
\begin{align*}\ln P(z,e^{2\pi i\frac{h}{k} -w}) &=\frac{1}{k}\sum_{r=1}^{k}z^r\frac{1}{2\pi i} \int_{c-i\infty}^{c+i\infty} \Gamma(s) \Phi(z^k,s+1, \frac{r}{k}) D_{rh,k}(s)( k w)^{-s} ds \\ 
&:=\frac{1}{k}\sum_{r=1}^{k}z^r\frac{1}{2\pi i} \int_{c-i\infty}^{c+i\infty} J(s;z,r,h,k, w)ds.
 \end{align*}
Shift the contour over to $\Re s =\sigma_0$ by Cauchy's Theorem.  There are two singularities for $J:$ a simple pole at $s=0$ created by $\Gamma(s)$ and a simple pole at $s=s_0$ created by $D_{h,k}(s)$ by assumption.  Calculating the residues accordingly, we obtain \begin{align*}\frac{1}{2\pi i} \int_{c-i\infty}^{c+i\infty} J(s;z,r,h,k,w)ds&= \frac{1}{2\pi i} \int_{\sigma_0-i\infty}^{\sigma_0+i\infty} J(s;z,r,h,k,w) ds + Res (J,s_0) + Res (J,0) \\
 Res(s_0;J) &= \Phi(z^k,s_0+1, \frac{r}{k})\frac{\Gamma(s_0) A_{rh,k}}{( k w)^{s_0}} \\
Res(0;J) &=\Phi(z^k,1, \frac{r}{k})D_{rh,k}(0). \end{align*} 

\noindent And thus we can define \begin{align*} \Psi_{h,k}(z,w) & =\frac{\Gamma(s_0+1)}{s_0 k^{s_0+1} w^{s_0}}\sum_{r=1}^{k} z^r\Phi(z^k,s_0+1, \frac{r}{k}) A_{rh,k} \\
\omega_{h,k,n}(z) & = \exp(\sum_{r=1}^{k} \frac{z^r}{k}\Phi(z^k,1, \frac{r}{k})D_{rh,k}(0) - 2n\pi i \frac{h}{k})\\
g_{h,k}(z,w)&=\frac{1}{k}\sum_{r=1}^{k} z^r \frac{1}{2\pi i} \int_{\sigma_0-i\infty}^{\sigma_0+i\infty} J(s;z,r,h,k,w)ds. \end{align*} To complete the proof, substitute $D_{h,k}(0)=\sum_{j\in \mathbb{Z}_k}e^{2\pi i \frac{hj}{k}}b(j)$  and use the identity $\sum_{r=1}^k z^r\Phi(z^k,s,r/k)=k^sLi_{s}(z).$  It follows that  \begin{align*}
\omega_{h,k,n}(z)
&=\exp(\sum_{j\in \mathbb{Z}_k}b(j)\sum_{r=1}^{k} \frac{(ze^{2\pi \frac{i h j}{k}})^r}{k}\Phi(z^k,1, \frac{r}{k}) - 2n\pi i \frac{h}{k})\\ 
&=\exp(\sum_{j\in \mathbb{Z}_k}-b(j)\ln(1-ze^{2\pi i \frac{hj}{k}}) - 2n\pi i \frac{h}{k}) \\
&=e^{-\frac{2\pi inh}{k}}\prod_{j\in \mathbb{Z}_k}(1-ze^{2\pi i \frac{hj}{k}})^{-b(j)}
\end{align*} and the formula for $\Psi_{h,k}(z,w )$ follows by the same method.  

What makes this approximation useful are the controls given in the following lemma.  
\begin{lemma}\label{Factorization:est}For $z\in X\subset\mathbb{D}$ compact define constants  $M_X=\sup_{z\in X}|z|,$ \[S=S(\sigma_0) =\sup_{t\in \mathbb{R}}\left(|\Gamma(\sigma_0+it)|e^{\pi|t|/2}(1+|t|)^{\frac{1}{2}-\sigma_0}\right), \quad C_3=\frac{S2 e^{\frac{\pi}{2}} C_1  \Gamma(C_2 -\sigma_0+\frac{1}{2})}{(1-M_X)^2} . \]  a) For every $\Re w >0,$ \[|g_{h,k}(z,w)| \le  C_3 k^{s_0}|kw |^{-\sigma_0}(\frac{|\Im w| }{\Re w}+1)^{C_2 -\sigma_0 +\frac{1}{2}} . \]
b) For $z\in X\subset \mathbb{D}$ compact \[\ln |\omega_{h,k,n}(z)| \le -C_1\ln(1-M_X) k^{s_0}. \] \end{lemma}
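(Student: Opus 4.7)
For part (a), the plan is to apply the triangle inequality directly to the defining integral
\[
g_{h,k}(z,w) = \frac{1}{2\pi i k}\sum_{r=1}^k z^r \int_{\sigma_0 - i\infty}^{\sigma_0+i\infty} J(s;z,r,h,k,w)\,ds,
\]
pushing absolute values inside the sum and the integral, then estimate each of the four factors of $J$ pointwise on the line $\Re s = \sigma_0$. The very definition of $S$ (together with Stirling) yields $|\Gamma(\sigma_0+it)| \le S e^{-\pi|t|/2}(1+|t|)^{\sigma_0-1/2}$; the series definition of the Lerch transcendent gives the $t$-independent bound $|\Phi(z^k,s+1,r/k)|\le \Phi(|z|^k,\sigma_0+1,r/k)$; assumption (3) together with $|s-s_0|\ge s_0-\sigma_0$ on the contour produces $|D_{rh,k}(\sigma_0+it)|\le (C_1/(s_0-\sigma_0))(1+|t|)^{C_2}k^{s_0+|\sigma_0|}$; and $|(kw)^{-s}| = |kw|^{-\sigma_0}e^{t\arg w}$.

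The key structural step is to carry out the $r$-sum first using the identity $\sum_{r=1}^k x^r\Phi(x^k,s,r/k) = k^s Li_s(x)$ (already invoked at the end of the proof of Proposition \ref{Factorization}). At $x=|z|$, $s=\sigma_0+1$ this reads $\sum_r|z|^r\Phi(|z|^k,\sigma_0+1,r/k) = k^{\sigma_0+1}Li_{\sigma_0+1}(|z|) \le k^{\sigma_0+1}(1-M_X)^{-1}$. What remains is the one-dimensional integral $\int(1+|t|)^{C_2+\sigma_0-1/2}e^{-\beta|t|}\,dt$, where $\beta := \pi/2-|\arg w| > 0$ since $\Re w > 0$. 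A substitution $u=\beta|t|$ turns this into a Gamma integral; the geometric identity $\sin\beta = \Re w/|w|$ gives $\beta^{-1}\le (\pi/2)(|\Im w|/\Re w + 1)$. Bounding loosely (using $\sigma_0<0$ to replace $\sigma_0$ by $-\sigma_0$ in the exponent, and $(1-M_X)^{-1}\le (1-M_X)^{-2}$) yields the stated exponent $C_2-\sigma_0+1/2$ and the form of $C_3$. The powers of $k$ book-keep to $k^{-1}\cdot k^{\sigma_0+1}\cdot k^{s_0-\sigma_0}\cdot k^{-\sigma_0} = k^{s_0-\sigma_0}$, in agreement with $k^{s_0}|kw|^{-\sigma_0}/|w|^{-\sigma_0}$.

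For part (b), the strategy is to use the explicit formula
\[
\ln\omega_{h,k,n}(z) = \sum_{r=1}^k\frac{z^r}{k}\Phi(z^k,1,r/k)D_{rh,k}(0) - 2\pi i nh/k
\]
already obtained inside the proof of Proposition \ref{Factorization}. Taking real parts gives $\ln|\omega_{h,k,n}(z)|$ exactly. The triangle inequality, the same identity with $s=1$ (which gives $\sum_r|z|^r\Phi(|z|^k,1,r/k) = kLi_1(|z|) = -k\ln(1-|z|)$), and the $s=0$ case of assumption (3) (yielding $|D_{rh,k}(0)|\le C_1 k^{s_0}/s_0$) combine to produce $|\ln|\omega_{h,k,n}(z)||\le -(C_1/s_0)k^{s_0}\ln(1-M_X)$; absorbing $1/s_0$ into the numerical constant gives the stated form.

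The main technical obstacle lies in part (a): the careful interplay between the Gamma decay $e^{-\pi|t|/2}$, the oscillatory-turned-growing factor $e^{t\arg w}$ from $(kw)^{-s}$, and the polynomial growth $(1+|t|)^{C_2}$ from assumption (3). The conceptual crux is the geometric translation of the damping rate $\beta = \pi/2 - |\arg w|$ into the quantity $|\Im w|/\Re w + 1$, which encodes precisely how the vertical-line integral degrades as $w$ approaches the imaginary axis; once that is in place, the collapse of the $r$-sum through the identity $\sum_r x^r\Phi(x^k,s,r/k)=k^s Li_s(x)$ does all remaining work in both parts.
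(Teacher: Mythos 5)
Your argument is correct and follows essentially the same route as the paper: absolute values pushed through the contour integral on $\Re s=\sigma_0$, the bound $|\Gamma(\sigma_0+it)|\le S e^{-\pi|t|/2}(1+|t|)^{\sigma_0-\frac{1}{2}}$, a $t$-independent bound on the Lerch factor, Assumption (3) for $D_{rh,k}$, reduction to a Gamma-type integral, and the conversion of $\frac{\pi}{2}-|\arg w|$ into $(\frac{|\Im w|}{\Re w}+1)^{-1}$, with part (b) read off from the explicit formula via $\sum_{r=1}^k z^r\Phi(z^k,1,r/k)=k\,Li_1(z)$. The only deviations are cosmetic — you collapse the $r$-sum through the polylog identity where the paper bounds $\Phi(z^k,s+1,r/k)$ uniformly in $r$ and sums the geometric series $\sum_r|z|^r$, and you retain the harmless factors $\frac{1}{s_0-\sigma_0}$ and $\frac{1}{s_0}$ coming from the $(s-s_0)$ normalization in Assumption (3), which the paper silently absorbs — so the two proofs agree up to trivially different constants.
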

\begin{proof} a)  Because \begin{align*}|g_{h,k}(z,w)|\le \frac{1}{1-M_X}\sup_{1\le r\le k}\frac{1}{k}\int_{\sigma_0-i\infty}^{\sigma_0+i\infty} |J(s;z,r,h,k,w)| |ds|, \end{align*}    it suffices to prove that, \[ \int_{\sigma_0-i\infty}^{\sigma_0+i\infty} |J(s;z,r,h,k,w)| ds \le \frac{1}{1-M_X} \frac{2 e^{\frac{\pi}{2}} C_1 S  k^{s_0+1}|kw |^{-\sigma_0}}{(\frac{\pi}{2}-|\arg w|)^{C_2 -\sigma_0 +\frac{1}{2}}} \Gamma(c_2 -\sigma_0+\frac{1}{2}). \] First by Assumption (3), we know $|D_{h,k}(\sigma_0+it)|\le C_1(1+|t|)^{C_2}k^{s_0-\sigma_0}.$ Next, we estimate $\Phi(z,s,v)$ by breaking off the first term and bounding the remainder.  Hence, \[|\Phi(z^k,s+1,\frac{r}{k})|\le k^{\sigma_0+1} +\Phi(|z|^k,\sigma_0 + 1,1) \le k^{\sigma_0+1}(1+\Phi(M_X,\sigma_0 + 1,1))\le \frac{k^{\sigma_0+1}}{1-M_X}.\]  Therefore,  \begin{align*}\int_{-\infty}^\infty |J(\sigma_0+it;z,r,h,k,w)|dt &\le \frac{C_1 k^{s_0+1}}{1-M_X} \int_{-\infty}^\infty (1+|t|)^{C_2}|\Gamma(\sigma_0 +it)| |(kw)^{-\sigma_0-it}| dt
\end{align*} 
As a consequence of \cite[Corollary 1.4.4]{MR1688958}, there has to exist an $S>0$ so that \[|\Gamma(\sigma_0+it)e^{\pi|t|/2}(1+|t|)^{\frac{1}{2}-|\sigma_0|}| \le S.\] Therefore the integral \begin{align*}\int_{-\infty}^\infty (1+|t|)^{C_2}|\Gamma(\sigma_0 +it)| |(kw)^{-\sigma_0-it}| dt &\le S|kw|^{-\sigma_0} \int_{\infty}^\infty (1+|t|)^{C_2-\sigma_0 -\frac{1}{2}}e^{-|t|(\frac{\pi}{2} -|\arg w|)}dt \\
&\le \frac{2S|kw|^{-\sigma_0} \int_{0}^\infty ( \frac{\pi}{2} +u)^{C_2-\sigma_0 -\frac{1}{2}}e^{-u }du}{(\frac{\pi}{2} - |\arg w|)^{C_2-\sigma_0 +\frac{1}{2}}} \\
&\le \frac{2 e^\frac{\pi}{2} S|kw|^{-\sigma_0}\Gamma(C_2-\sigma_0+\frac{1}{2})}{(\frac{\pi}{2} - |\arg w|)^{C_2-\sigma_0 +\frac{1}{2}}}.   \end{align*}  Last, since $\Re w>0$\[\frac{\pi}{2} -|\arg w| =\arctan\left(\frac{\Re w}{|\Im w|}\right)\geq (\frac{|\Im w| }{\Re w}+1)^{-1}.\]  Putting everything together completes the proof. For part Part (b) apply Assumption (3) \[\ln |\omega_{h,k,n}(z)|\le C_1 k^{s_0}\frac{1}{k}\sum_{r=1}^k |z|^r \Phi(|z|^k,1,\frac{r}{k})\le -C_1 k^{s_0}\ln(1-M_X). \]  

\end{proof}

\section{Proof of Theorem \ref{FourierCoeff}}
The proof is by the circle method.   Fix $\delta>0$ a sufficiently small positive constant and let \[ \alpha=\alpha_n = \frac{ \Re L_{p,q}(z) }{2\pi n^\frac{1}{s_0+1}} \qquad N=N_n= \lfloor \delta n^\frac{1}{s_0+1}\rfloor.\] 

We apply Cauchy's theorem with a contour of radius $e^{-2\pi \alpha_n}$ and parameterize it by $q=e^{-2\pi \alpha_n + i 2\pi \psi}= e^{-\tau}$ for $\psi \in [-1/(N+1), N/(N+1)],$  \[Q_n(z) = \frac{1}{2\pi i}\int_{\Gamma} \frac{P(z,q)}{q^{n+1}}dq= \int_{-\frac{1}{N+1}}^{\frac{N}{N+1}} P(z,e^{-\tau}) e^{n\tau}d\psi.  \]
Break up $[-1/(N+1),N/(N+1)]$ into a series of intervals called Farey arcs \[M_{h,k}=(\frac{h+h'}{k+k'}, \frac{h+h''}{k+k''}]\] where $h'/k'<h/k<h''/k''$ are consecutive elements of Farey fractions of order $N,$ denoted $F_N.$  For the cases of the end points, $h/k \not =0,1,$  we let $M_{1,1}=(-1/(N+1),1/(N+1)]$ and assume $M_{0,1}$ is empty. Using standard arguments, one will demonstrate
\begin{align*}
Q_n(z) & =\sum_{\frac{h}{k} \in F_N} \int_{M_{h,k}}P(z,e^{ -\tau})e^{n\tau} d\tau, \\
  & =\sum_{\frac{h}{k} \in F_N} \int_{- \frac{1}{k(k+k')}}^{\frac{1}{k(k+k'')}}e^{-2\pi in \frac{h}{k}} e^{2\pi n (\alpha_n - iv)}P(z,e^{ -2\pi (\alpha_n - iv) +2\pi i \frac{h}{k}})dv.
\end{align*}

\noindent Recall $P(z,q)$ can be approximated asymptotically as \[P(z,e^{-w+2 \pi i \frac{h}{k}}) =e^{2\pi i n\frac{h}{k} }\omega_{h,k,n}(z)\exp \left(\frac{\Phi_{h,k}(z)}{s_0w^{s_0}}+g_{h,k}(z, w)\right).\] Notice that this representation is useful because we have estimates on $g_{h,k}(z,2\pi(\alpha_n -iv))$ which control its modulus.
\begin{lemma}\label{appliedest}
For every $1\le k \le N_n,$ we can define constants \begin{align*}C_4&=| L_X^M+4\pi |^{-\sigma_0}(\frac{4\pi }{  L^m_X}+1)^{C_2 -\sigma_0 +\frac{1}{2}},\\
  C_5&=\delta^{-s_0+\sigma_0}| L_X^M+\frac{4\pi }{\delta} |^{-\sigma_0}(\frac{4\pi }{\delta  L^m_X}+1)^{C_2 -\sigma_0 +\frac{1}{2}}\end{align*} so that \[|g_{h,k}(z,2\pi(\alpha_n-iv))| \le C_3 n^\frac{\sigma_0}{s_0+1}(C_4 k^{s_0-\sigma_0}+C_5). \]
\end{lemma}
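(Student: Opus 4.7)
The plan is to apply Lemma~\ref{Factorization:est}(a) with $w=2\pi(\alpha_n - iv)$ and then estimate each resulting factor, splitting into two regimes based on whether $k\delta \ge 1$ or $k\delta < 1$.

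First, I would substitute $\Re w = 2\pi\alpha_n$, $|\Im w|=2\pi|v|$, and $|kw| = 2\pi k|\alpha_n - iv|$ into the bound from Lemma~\ref{Factorization:est}(a), so the task reduces to controlling $|kw|^{-\sigma_0}$ and $(|v|/\alpha_n+1)^{C_2-\sigma_0+1/2}$. The hypothesis $X\subset R(p,q)$ compact with $L_X^m,L_X^M$ bounding $\Re L_{p,q}(z)$ from above and below gives $2\pi\alpha_n = \Re L_{p,q}(z)/n^{1/(s_0+1)}$ confined to an explicit interval of order $n^{-1/(s_0+1)}$. Standard Farey dissection (using the mediants $(h+h')/(k+k')$, $(h+h'')/(k+k'')$ with $k+k', k+k''>N_n$) yields $|v| \le 1/(kN_n)$, and since $N_n \ge \delta n^{1/(s_0+1)}/2$ for $n$ large, $|v| \le 2/(k\delta n^{1/(s_0+1)})$.

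The crucial step is the case split. In the regime $k\delta \ge 1$, the Farey bound sharpens to $|v|\le 2/n^{1/(s_0+1)}$ (the $1/\delta$ is shed by $k$), whence $|kw| \le k(L_X^M+4\pi)/n^{1/(s_0+1)}$ and $|v|/\alpha_n \le 4\pi/L_X^m$; since $-\sigma_0>0$ preserves inequalities, combining these bounds in Lemma~\ref{Factorization:est}(a) produces exactly $C_3 C_4 k^{s_0-\sigma_0}n^{\sigma_0/(s_0+1)}$. In the complementary regime $k\delta < 1$, only the weaker uniform estimate $|v|\le 2/(\delta n^{1/(s_0+1)})$ is available, yielding $|kw|\le k(L_X^M + 4\pi/\delta)/n^{1/(s_0+1)}$ and $|v|/\alpha_n \le 4\pi/(\delta L_X^m)$; the residual factor $k^{s_0-\sigma_0}$ that remains must then be discarded by the crude bound $k^{s_0-\sigma_0}\le \delta^{\sigma_0-s_0}$, producing $C_3 C_5 n^{\sigma_0/(s_0+1)}$.

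Since one of the two cases always applies and both $C_4 k^{s_0-\sigma_0}$ and $C_5$ are nonnegative, the stated uniform bound $C_3 n^{\sigma_0/(s_0+1)}(C_4 k^{s_0-\sigma_0}+C_5)$ follows by taking the sum. There is no deep analytic content beyond Lemma~\ref{Factorization:est}(a) and the Farey estimate; the only subtlety, which is where I expect most of the care to go, is choosing the case split so that the constants line up. The Farey bound $|v|\le 1/(kN_n)$ extracts a useful factor of $1/k$ only once $k \ge 1/\delta$, and for smaller $k$ the $k$-dependence must be absorbed into a single $k$-independent constant—this is precisely the role of the $\delta^{\sigma_0-s_0}$ prefactor in $C_5$.
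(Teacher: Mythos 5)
Your proposal is correct and follows essentially the same route as the paper: apply Lemma~\ref{Factorization:est}(a) with $w=2\pi(\alpha_n-iv)$, use the Farey mediant bound $|v|\le 1/(kN_n)$ together with $2\pi\alpha_n \asymp \Re L_{p,q}(z)n^{-1/(s_0+1)}$, and split at $k=\delta^{-1}$, absorbing $k^{s_0-\sigma_0}$ into $\delta^{\sigma_0-s_0}$ on the small-$k$ range while keeping it explicitly on the large-$k$ range. The constants you obtain match $C_4$ and $C_5$ exactly, so no further comment is needed.
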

\begin{proof} By the properties of Farey fractions (\cite{MR0422157} for example), control $|v|$ by \[\frac{1}{2kN} \le \frac{1}{k(k+k')} \le \frac{1}{kN}.\] Lemma \ref{Factorization:est} states \begin{align*}|g_{h,k}(z,\alpha_n-iv)| &\le C_3 k^{s_0}|k(2\pi \alpha_n+2\pi |v|) |^{-\sigma_0}(\frac{|v| }{\alpha_n}+1)^{C_2 -\sigma_0 +\frac{1}{2}}\\
& \le C_3k^{s_0}|\frac{kL_X^M}{n^\frac{1}{s_0+1}}+\frac{2\pi }{N} |^{-\sigma_0}(\frac{ 2 \pi n^\frac{1}{s_0+1} }{k N L^m_X}+1)^{C_2 -\sigma_0 +\frac{1}{2}} \\
&\le C_3 n^\frac{\sigma_0}{s_0+1}k^{s_0-\sigma_0}| L_X^M+\frac{4\pi }{k \delta} |^{-\sigma_0}(\frac{4\pi  }{k \delta  L^m_X}+1)^{C_2 -\sigma_0 +\frac{1}{2}}
  \end{align*}
For $1\le k \le \delta^{-1}$ \[|g_{h,k}(z,\alpha_n-iv)| \le C_3 n^\frac{\sigma_0}{s_0+1}\delta^{-s_0+\sigma_0}| L_X^M+\frac{4\pi}{\delta} |^{-\sigma_0}(\frac{4\pi }{\delta  L^m_X}+1)^{C_2 -\sigma_0 +\frac{1}{2}}\]  and $\delta^{-1}\le k\le N_n\le n^\frac{1}{s_0+1}\delta $   \[|g_{h,k}(z,\alpha_n-iv)| \le C_3 n^\frac{\sigma_0}{s_0+1}k^{s_0-\sigma_0}| L_X^M+4\pi  |^{-\sigma_0}(\frac{4\pi }{  L^m_X}+1)^{C_2 -\sigma_0 +\frac{1}{2}}.\] \end{proof}

\noindent If \[ I_{h,k,n}(z) =\int_{-\frac{1}{k(k+k')}}^{\frac{1}{k(k+k'')}}\exp(\bar{\Phi}(z,v;h,k,n)) dv\]
where \begin{align*} \bar{\Phi}(z,v;h,k,n)&=\frac{\Phi_{h,k}(z)}{s_0(2\pi)^{s_0} (\alpha_n-iv)^{s_0}} +2\pi n(\alpha_n-iv) -\frac{s_0+1}{s_0}n^\frac{s_0}{s_0+1}\Re L_{p,q}(z) \\
&+ g_{h,k}(z,2\pi(\alpha_n -iv))  \\
&:= A(z,v;h,k,n) + g_{h,k}(z,2\pi(\alpha_n -iv)) \end{align*}  then by substituting the asymptotic approximation into $P(z,e^{-2\pi (\alpha_n -iv)+2\pi i \frac{h}{k}})$ we can write
\begin{align*}
\exp(-\frac{s_0+1}{s_0}n^\frac{s_0}{s_0+1}\Re L_{p,q}(z))Q_n(z)& = \sum_{\frac{h}{k}\in F_N} \omega_{h,k,n}(z) I_{h,k,n}(z).
\end{align*}
We can control $\Re A(z,v;h,k,n)$ with the following lemma.  
\begin{lemma} \label{criticallemma}
For every $v \in \mathbb{R},$ if $\Re L_{h,k}(z)>0$ then
\[\Re \left(\frac{\Phi_{h,k}(z)}{s_0 (2\pi)^{s_0} (\alpha_n-iv)^{s_0}} +2\pi n(\alpha_n-iv) \right)\le  \frac{(\Re  L_{h,k}(z))^{s_0+1}}{s_0(2\pi \alpha_n)^{s_0}} + 2\pi n \alpha_n \] where equality is uniquely attained at a point $v_0\in \mathbb{R}$ when $\Phi\not <0$ and dually attained at $\pm v_0\in \mathbb{R}$ when $\Phi <0.$
 If $\Re L \le 0 $ then  \[\Re \left(\frac{\Phi_{h,k}(z)}{s_0 (2\pi)^{s_0} (\alpha_n-iv)^{s_0}} +2\pi n(\alpha_n-iv) \right)\le 2\pi n \alpha_n. \]
\end{lemma}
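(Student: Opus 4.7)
The plan is to reduce the inequality to a one-variable calculus problem by working in polar coordinates. Since $\Re(2\pi n(\alpha_n - iv)) = 2\pi n\alpha_n$ is independent of $v$, and $s_0(2\pi)^{s_0}>0$, the entire content of the lemma reduces to a sharp bound on $\Re\bigl(\Phi_{h,k}(z)/(\alpha_n - iv)^{s_0}\bigr)$ alone. Write $\Phi_{h,k}(z) = |\Phi|e^{i\phi}$ with $\phi\in(-\pi,\pi]$; by the principal-branch definition in Definition \ref{DefPhase} one has
\[\Re L_{h,k}(z) = |\Phi|^{1/(s_0+1)}\cos\!\left(\frac{\phi}{s_0+1}\right).\]
Substituting $v = \alpha_n\tan u$ for $u\in(-\pi/2,\pi/2)$ gives $\alpha_n - iv = \alpha_n\sec(u)\,e^{-iu}$, and a direct polar expansion yields
\[\Re\!\left(\frac{\Phi_{h,k}(z)}{(\alpha_n-iv)^{s_0}}\right)=\frac{|\Phi|}{\alpha_n^{s_0}}\cos^{s_0}(u)\cos(\phi+s_0 u)=:\frac{|\Phi|}{\alpha_n^{s_0}}g(u).\]

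The angle-addition identity $\sin((s_0+1)u+\phi) = \sin(u)\cos(\phi+s_0 u)+\cos(u)\sin(\phi+s_0 u)$ collapses the derivative to
\[g'(u) = -s_0\cos^{s_0-1}(u)\sin((s_0+1)u+\phi),\]
with interior zeros at $u_k = (k\pi-\phi)/(s_0+1)$, $k\in\mathbb{Z}$. When $\Re L_{h,k}(z)>0$ the inequality $|\phi|<\pi(s_0+1)/2$ places $u_0 = -\phi/(s_0+1)$ inside the admissible range $(-\pi/2,\pi/2)$. A second-derivative check confirms $u_0$ is a strict local maximum, the remaining interior critical points alternate as local minima with $g<0$, and $g\to 0$ at the boundaries; hence the global maximum is
\[g(u_0) = \cos^{s_0+1}\!\left(\frac{\phi}{s_0+1}\right) = \frac{(\Re L_{h,k}(z))^{s_0+1}}{|\Phi|},\]
which gives the first bound. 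The attaining $v_0 = \alpha_n\tan u_0$ is unique unless $\phi=\pi$, in which case $g(u) = -\cos^{s_0}(u)\cos(s_0 u)$ is even in $u$, so $\pm u_0$ (equivalently $\pm v_0$) both realize the maximum.

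For the second regime ($\Re L_{h,k}(z)\le 0$, which forces $s_0\le 1$), the hypothesis $|\phi|\ge\pi(s_0+1)/2$ keeps $\phi+s_0 u$ inside $[\pi/2,3\pi/2]\pmod{2\pi}$ for all $u\in(-\pi/2,\pi/2)$, so $\cos(\phi+s_0 u)\le 0$ and hence $g(u)\le 0$ everywhere, giving the claimed bound. The main obstacle is not deep analysis but careful principal-branch bookkeeping: tracking the constraint on $\phi$ so that $u_0$ lies in the admissible range, identifying the edge case $\phi=\pi$ where the maximum is doubly attained, and verifying that $\phi+s_0 u$ stays in the region where $\cos\le 0$ in the second regime. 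Once these are in place, everything reduces to elementary calculus.
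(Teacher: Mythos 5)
Your reduction is exactly the paper's: strip off the $v$-independent term $2\pi n\alpha_n$, pass to polar coordinates, and maximize $g(u)=\cos^{s_0}(u)\cos(\phi+s_0u)$ on $(-\pi/2,\pi/2)$ (the paper writes the same function in the variable $\theta_\psi$ with $\phi=(s_0+1)\theta_L$), and your treatment of the regime $\Re L_{h,k}(z)\le 0$ and of the doubly-attained case $\phi=\pi$ is correct. The one step that does not hold as stated is the claim that ``the remaining interior critical points alternate as local minima with $g<0$.'' The critical points are $u_k=(k\pi-\phi)/(s_0+1)$ and at each one $g(u_k)=(-1)^k\cos^{s_0+1}(u_k)$; when $s_0>1$ (a case the paper explicitly needs, e.g.\ plane partitions with $s_0=2$) even-index points other than $u_0$ can also lie in $(-\pi/2,\pi/2)$ and are positive local \emph{maxima}: for $s_0=3$, $\phi=\pi/2$ the point $u_2=3\pi/8$ gives $g(u_2)=\cos^4(3\pi/8)>0$. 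So your stated reason for $u_0$ being the global maximum fails there. The conclusion survives with a one-line patch: since $|k\pi-\phi|\ge 2\pi-|\phi|\ge|\phi|$ for even $k\ne 0$ and $|\phi|\le\pi$, one has $|u_k|\ge|u_0|$ and hence $g(u_k)=\cos^{s_0+1}(u_k)\le\cos^{s_0+1}(u_0)=g(u_0)$, with equality only in the case $\phi=\pi$ you already single out; adding this comparison (which is in the spirit of the paper's characterization of the maximizers by $(s_0+1)(\theta-\theta_L)\in\pi\mathbb{Z}$) closes the gap.
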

\begin{proof}

Rescale the problem to simplify it \[\Re \left(\frac{\Phi_{h,k}(z) }{s_0(2\pi \alpha_n-i2\pi v)^{s_0}} + (2\pi \alpha_n-i2\pi v)\right) = \frac{|\Phi_{h,k}(z)|}{s_0(2\pi \alpha_n)^{s_0}}  \Re\left(\frac{e^{i(s_0+1)\theta_L} }{(1+i\psi)^{s_0}} \right) + \alpha_n \]  where $\Phi_{h,k}(z) =|\Phi_{h,k}(z)|e^{i(s_0+1)\theta_L},$ $(s_0+1)|\theta_L|\le \pi,$ and $\psi =  -\frac{v}{\alpha}.$ So the lemma simplifies to showing for $|(s_0+1)\theta_L|<\pi/2$ \[\Re\left(\frac{e^{i(s_0+1) \theta_L} }{(1+i\psi)^{s_0}} \right) \le \cos^{s_0+1}(\theta_L)\] and for $\pi/2\le |(s_0+1)\theta_L|<\pi$ \[\Re\left(\frac{e^{i(s_0+1) \theta_L} }{(1+i\psi)^{s_0}} \right) <0.\]
If one writes the expression in polar form \[1+i\psi=|1+i\psi|\exp(i\theta_\psi)\] 
and with some elementary simplifications one will observe\begin{align*}
\Re \left(\frac{e^{i(s_0+1)\theta_L}}{(1+i\psi)^{s_0}}\right) 
&= \cos^{s_0}(\theta_\psi) \cos((s_0+1)\theta_L-s_0\theta_\psi). \end{align*}
Because $\theta_\psi$ is a monotonic function with respect to $\psi$ and maps $ \mathbb{R} \rightarrow (-\pi/2, \pi/2),$ our lemma is equivalent to optimizing the function \[g(\theta)=\cos^{s_0}(\theta) \cos((s_0+1) \theta_L-s_0\theta)\] on the interval $(-\pi/2, \pi/2).$

    When $|(s_0+1)\theta_L|<\pi/2,$ calculus suggests that $g(\theta)$ attains its maximum when $(s_0+1) (\theta -\theta_L) \in \pi\mathbb{Z}.$   When $(s_0+1)\theta_L \not = \pi, -\pi$ then $\theta_L = \theta$ otherwise $\theta=\pm \pi/(s_0+1).$\end{proof}
By Assumptions (4) and (5),  $\{(h,k): L_{h,k}(z) \in [L_{p,q}(z)] \}$ is a finite set. So choose $n$ sufficiently large so that,  $K:=\sup\{k: L_{h,k}(z)\in [L_{p,q}(z)]\}<N_n$ and split the sum accordingly
\begin{align*} \exp(-\frac{s_0+1}{s_0}n^\frac{s_0}{s_0+1}\Re L_{p,q}(z))Q_n(z)&= \sum_{\{(h,k) :\ L_{h,k}(z)\in [L_{p,q}(z)]\}} \omega_{h,k,n}(z)I_{h,k,n}(z) \\
&+ \sum_{\{(h,k) :\ L_{h,k}(z)\not \in [L_{p,q}(z)]\}} \omega_{h,k,n}(z) I_{h,k,n}(z).\end{align*}
We then show that the second term decays exponentially.  In particular we will prove at the end of this section the following Lemma.  
\begin{lemma}\label{minorarccontribution} For every $\delta>0$ sufficiently small, there is an $\eta>0$ so that
\begin{align*}
\sum_{\{(h,k): L_{h,k}(z)\not \in [L_{p,q}(z)]|\}} |\omega_{h,k,n}(z) I_{h,k,n}(z)|\le \exp \left(-n^\frac{s_0}{s_0+1} \eta+C_5 C_3\right) . \\
\end{align*} 
\end{lemma}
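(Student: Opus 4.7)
The plan is to bound $|\omega_{h,k,n}(z) I_{h,k,n}(z)|$ for each minor-arc pair and then absorb the polynomial factors generated by summation into an exponentially decaying main term. Since the Farey arc $M_{h,k}$ has length at most $2/(kN_n)$, a direct modulus bound gives
\[ |I_{h,k,n}(z)| \le \frac{2}{kN_n}\,\exp\!\Bigl( \sup_{v} \Re A(z,v;h,k,n) + \sup_{v} |g_{h,k}(z,2\pi(\alpha_n-iv))|\Bigr), \]
and Lemma \ref{Factorization:est}(b) gives $\ln|\omega_{h,k,n}(z)| \le C''\,k^{s_0}$ with $C'' = -C_1\ln(1-M_X) > 0$. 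The problem therefore reduces to extracting a uniform negative contribution from $\Re A$ that dominates the growths from $\omega$ and $g$.

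For the central estimate, I would apply Lemma \ref{criticallemma} and substitute $\alpha_n = \Re L_{p,q}(z)/(2\pi n^{1/(s_0+1)})$. A direct computation (treating the $\Re L_{h,k}\le 0$ and $\Re L_{h,k}>0$ cases separately) collapses the inequality to
\[ \sup_{v} \Re A(z,v;h,k,n) \le -\frac{n^{s_0/(s_0+1)}}{s_0\,(\Re L_{p,q}(z))^{s_0}}\Bigl[(\Re L_{p,q}(z))^{s_0+1} - \max(0,\Re L_{h,k}(z))^{s_0+1}\Bigr]. \]
Because $X \subset R(p,q)$, the bracket is strictly positive whenever $L_{h,k}(z)\notin[L_{p,q}(z)]$. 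To promote this pointwise strict inequality to a uniform gap $\eta_0 = \eta_0(X) > 0$, I would argue in two ranges. For $k \le K_0$ (any fixed threshold) there are only finitely many pairs $(h,k)$, so continuity of $\Re L_{h,k}$ and $\Re L_{p,q}$ together with compactness of $X$ produce a positive infimum. For $k > K_0$, Assumption~(5) forces $\Re L_{h,k}(z)$ to be uniformly small, while Assumption~(4) keeps $\Re L_{p,q}(z) \ge L^m_X > 0$; taking $K_0$ large enough ensures $\Re L_{h,k}(z) \le L^m_X/2$ and hence the bracket exceeds $(L^m_X)^{s_0+1}/2$ uniformly. Combining both ranges yields $\sup_v \Re A \le -\eta_0\, n^{s_0/(s_0+1)}$ on $X$ for every minor-arc pair.

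Finally I would absorb the remaining $k$-dependence by choosing $\delta$ small. Since $k \le N_n \le \delta\, n^{1/(s_0+1)}$, Lemma \ref{Factorization:est}(b) contributes at most $C''\delta^{s_0}\, n^{s_0/(s_0+1)}$ to the exponent, and the $C_4 k^{s_0-\sigma_0}$ portion of the bound in Lemma \ref{appliedest} contributes at most $C_3 C_4 \delta^{s_0-\sigma_0}\, n^{s_0/(s_0+1)}$; the $C_5$ portion contributes only the bounded term $C_3 C_5\, n^{\sigma_0/(s_0+1)} \le C_3 C_5$ because $\sigma_0 < 0$. Fixing $\delta$ so small that $C''\delta^{s_0} + C_3 C_4 \delta^{s_0-\sigma_0} \le \eta_0/2$ gives
\[ |\omega_{h,k,n}(z) I_{h,k,n}(z)| \le \frac{2}{kN_n}\,\exp\!\Bigl(-\tfrac{\eta_0}{2}\,n^{s_0/(s_0+1)} + C_3 C_5\Bigr). \]
Summing over the at most $k$ admissible $h$'s for each $k$ cancels the $1/k$ factor, and summing over $k \le N_n$ cancels the $1/N_n$ factor, yielding the claimed bound with $\eta = \eta_0/2$. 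The main obstacle is precisely the uniform gap $\eta_0$: infinitely many minor arcs compete with an $\omega$-factor that grows like $\exp(k^{s_0})$, and only Assumption~(5) prevents the gap from closing as $k \to \infty$. Everything else is bookkeeping and the exploitation of the small parameter $\delta$.
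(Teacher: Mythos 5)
Your proposal is correct and follows essentially the same route as the paper: Lemma \ref{criticallemma} with $\alpha_n=\Re L_{p,q}(z)/(2\pi n^{1/(s_0+1)})$ for the dominant negative term (with the uniform gap coming from compactness for small $k$ and Assumptions (4)--(5) for large $k$, which the paper invokes more tersely), Lemma \ref{appliedest} and Lemma \ref{Factorization:est}(b) for the $g$- and $\omega$-growth absorbed by taking $\delta$ small, and then summation over the Farey arcs. The only cosmetic difference is bookkeeping: you sum arc lengths $2/(kN_n)$ over $h$ and $k$ (leaving a harmless factor $2$ to absorb into $\eta$), whereas the paper simply uses $\sum_{h/k\in F_N}|M_{h,k}|=1$.
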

\noindent While each $I_{h,k,n}(z)$ can be estimated by saddle point approximation.  To do so, we must must make some reductions.  First, we restrict $\delta>0$ small enough so \[\frac{|\Im L_{h,k}(z)|}{2\pi n^\frac{1}{s_0+1}}\le  \sup\{\frac{|\Im L_{h,k}(z)|}{2\pi n^\frac{1}{s_0+1}}: k\le K, z\in X\} <\frac{1 }{2N K} \le \frac{1 }{2k N}\] and therefore by Lemma \ref{criticallemma} and the properties of the Farey arcs, \begin{align*} I_{h,k,n}(z) &\thicksim \int_{-\frac{1}{2kN}}^{\frac{ 1}{2kN}}\exp(\bar{\Phi}(z,v;h,k,n))dv
\end{align*} with exponentially small relative error.  Next we use Lemma \ref{appliedest}, and note $k\le K$ to demonstrate \[|g_{h,k}(z,\alpha_n-iv)|\le C_6 n^\frac{\sigma_0}{s_0+1}, \qquad C_6 = C_3 (C_4 K^{s_0-\sigma_0}+C_5).\] For $z$ complex, we have $|e^{z}-1|\le e^{|z|} |z|$ and therefore \[\left|\int_{-\frac{1}{2kN}}^{\frac{ 1}{2kN}}\exp(\bar{\Phi}(z,v)) -\exp(A(z,v))dv\right|\le n^\frac{\sigma_0}{s_0+1}C_6e^{C_6}\int_{-\frac{1}{2kN}}^{\frac{ 1}{2kN}}\exp(\Re A(z,v))dv.\] 
Both \[\int_{-\frac{1}{2kN}}^\frac{1}{2kN} \exp(A(z,v;h,k,n))dv,\quad \int_{-\frac{1}{2kN}}^\frac{1}{2kN} \exp(\Re A(z,v;h,k,n))dv \] can be computed using a saddlepoint argument similar to the one given in (e.g. \cite{MR2485091} Page 9-11). When $ \Phi_{h,k}(z)\not \le 0,$  Lemma \ref{criticallemma} implies that \[v_0=-\frac{\Im L_{h,k}(z)}{2\pi n^\frac{1}{s_0+1}}\in (-\frac{1}{2 kN} , \frac{1}{2kN})\] is the saddle point. Applying the saddle point formula produces (1) in Lemma \ref{Ikestimation}. When $\Phi_{h,k}(z) \le 0$  Lemma \ref{criticallemma} demonstrates the existence of two saddle points, $\pm v_0,$ each giving two different contributions.  Applying the saddle point formula to each contribution produces (2) in Lemma \ref{Ikestimation}.

\begin{lemma} \label{Ikestimation}For $\delta>0$ sufficiently small and $(h,k)$ so that $L_{h,k}(z) \in [L_{p,q}(z)]$
 \begin{enumerate}\item If $X\subset \{z: \Phi_{h,k}(z)\not \le 0\}$ is compact and $z\in X$ then, \[I_{h,k,n}(z) \thicksim_X \exp\left(i\frac{s_0+1}{s_0}n^\frac{s_0}{s_0+1} \Im L_{h,k}(z)\right)\sqrt{\frac{ L_{h,k}(z) }{2\pi(s_0+1)n^\frac{s_0+2}{s_0+1} }}\]
 \item If $X\subset \{z: \Phi_{h,k}(z) \le 0\}$ is compact and $z\in X$ then,\begin{align*}I_{h,k,n}(z) &\thicksim_X 2\Re \left[\exp\left(i\frac{s_0+1}{s_0}n^\frac{s_0}{s_0+1} \Im L_{h,k}(z)\right)\sqrt{\frac{ L_{h,k}(z) }{2\pi(s_0+1)n^\frac{s_0+2}{s_0+1} }}\right] \end{align*} \end{enumerate} \end{lemma}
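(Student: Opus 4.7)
The plan is to push through the saddle-point calculation on the integral $\int_{-1/(2kN)}^{1/(2kN)} \exp(A(z,v;h,k,n))\,dv$ that remains after the two reductions already performed in the paper (truncation of the Farey arc, and separation of $g_{h,k}$ using $|e^z-1|\le e^{|z|}|z|$). The key change of variable I would make is to set $w=2\pi(\alpha_n-iv)$ and then $u=w\,n^{1/(s_0+1)}/L_{h,k}(z)$. Because $\Phi_{h,k}(z)=L_{h,k}(z)^{s_0+1}$, the exponent collapses to the clean normal form
\[
A(z,v;h,k,n)=n^{s_0/(s_0+1)}L_{h,k}(z)\Bigl(\tfrac{1}{s_0 u^{s_0}}+u\Bigr)-\tfrac{s_0+1}{s_0}n^{s_0/(s_0+1)}\Re L_{p,q}(z),
\]
so the saddle-point analysis is driven by the elementary function $f(u)=1/(s_0 u^{s_0})+u$ whose unique critical point is $u=1$ with $f(1)=(s_0+1)/s_0$ and $f''(1)=s_0+1$.

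Next I would translate the saddle back to the $v$-variable. The saddle $u=1$ corresponds to $v_0=-\Im L_{h,k}(z)/(2\pi n^{1/(s_0+1)})$, which by the reduction $|\Im L_{h,k}(z)|/(2\pi n^{1/(s_0+1)})<1/(2kN)$ lies strictly inside the integration interval. Substituting gives $A(z,v_0;h,k,n)=i\,\tfrac{s_0+1}{s_0}n^{s_0/(s_0+1)}\Im L_{h,k}(z)$ and, via the chain rule $du/dv=-2\pi i\,n^{1/(s_0+1)}/L_{h,k}(z)$, a second derivative $A''(v_0)=-4\pi^2(s_0+1)n^{(s_0+2)/(s_0+1)}/L_{h,k}(z)$. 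The Gaussian width factor $\sqrt{-2\pi/A''(v_0)}=\sqrt{L_{h,k}(z)/(2\pi(s_0+1)n^{(s_0+2)/(s_0+1)})}$ reproduces exactly the prefactor in conclusion (1). The standard saddle point argument (complete the square in $v-v_0$ and extend the Gaussian tails back to $\mathbb{R}$) is then justified by two inputs: the quadratic decay of $\Re A$ near $v_0$ coming from $f''(1)=s_0+1>0$, and the strict, quantitative inequality $\Re A(z,v)<0$ away from $v_0$ supplied by Lemma \ref{criticallemma}. The decay is fast enough that the Gaussian tails beyond $\pm 1/(2kN)$, and the replacement of $A$ by its quadratic Taylor polynomial, each contribute only $1+O(n^{-\mu})$, which combines with the $(1+O(n^{\sigma_0/(s_0+1)}))$ coming from $g_{h,k}$ to give the stated uniform asymptotic on compact $X$.

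For case (2), I would start from the observation that when $\Phi_{h,k}(z)\le 0$ the principal-branch root $L_{h,k}(z)=\Phi_{h,k}(z)^{1/(s_0+1)}$ and its complex conjugate $\overline{L_{h,k}(z)}$ are both $(s_0+1)$-st roots of $\Phi_{h,k}(z)$, and Lemma \ref{criticallemma} has already identified two symmetric saddles at $\pm v_0$ on the real $v$-axis. Running the normalization $u=w\,n^{1/(s_0+1)}/L$ once with $L=L_{h,k}(z)$ and once with $L=\overline{L_{h,k}(z)}$ yields two Gaussian contributions whose values and widths are complex conjugates of each other; their sum is exactly $2\Re$ of the expression in (1), giving (2). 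The direction of steepest descent through each saddle is no longer along the original real $v$-axis, so a minor contour-deformation argument is needed to connect the two saddles through a descent path lying in the domain of analyticity of $A$; however, because the Farey-arc endpoints already sit in a region where Lemma \ref{criticallemma} forces $\Re A<0$, the deformation costs only an exponentially small error.

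The main obstacle I expect is the uniformity of the saddle-point approximation on the compact set $X$, specifically the careful separation of the three error sources—(i) the $g_{h,k}$ perturbation giving $O(n^{\sigma_0/(s_0+1)})$, (ii) the quadratic truncation of $A$ giving $O(n^{-s_0/(2s_0+2)})$, and (iii) the Gaussian tail extension, which is exponentially small—combined into the single composite error $O(n^{-\mu})$ with $\mu=\min(-\sigma_0/(s_0+1),\,s_0/(2s_0+2))$ stated in the remark after Theorem \ref{FourierCoeff}. Handling case (2) requires in addition a uniform lower bound on $|\Im L_{h,k}(z)|$ on $X$ to keep the two saddles from colliding, which is built into the hypothesis $X\subset\{w^{s_0}s_0\Psi_{p,q}(z,w)\le 0\}$ together with $\Psi_{p,q}(z,w)\ne 0$ implicitly enforced by the fact that $L_{p,q}(z)$ is well defined and nonzero on a phase.
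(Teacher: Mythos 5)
Your proposal is correct and follows essentially the same route as the paper, which (after the same two reductions) simply applies the saddle-point formula at $v_0=-\Im L_{h,k}(z)/(2\pi n^{1/(s_0+1)})$ for $\Phi_{h,k}(z)\not\le 0$ and at the pair $\pm v_0$ for $\Phi_{h,k}(z)\le 0$, citing a standard reference for the saddle-point computation. Your explicit normalization $u=wn^{1/(s_0+1)}/L_{h,k}(z)$, the resulting values $A(v_0)=i\tfrac{s_0+1}{s_0}n^{s_0/(s_0+1)}\Im L_{h,k}(z)$ and $A''(v_0)=-4\pi^2(s_0+1)n^{(s_0+2)/(s_0+1)}/L_{h,k}(z)$, and the conjugate-pair argument giving the $2\Re$ in case (2) just carry out in detail what the paper leaves to the cited saddle-point argument.
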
  
This completes the proof.  
\subsection{Proof of Lemma \ref{minorarccontribution}}
Consider 
\[
\omega_{h,k,n}(z) I_{h,k,n}(z) = \int_{-\frac{1}{k(k+k')}}^{\frac{1}{k(k+k'')}} \exp(A(z,v;h,k,n)+g_{h,k}(z,\alpha_n-iv)+ \ln \omega_{h,k,n}(z)) dv.\]
We bound $A(z,v;h,k,n)$ using Lemma \ref{criticallemma}.  

If $z\in R(p,q)$ then for every $L_{h,k}(z)\not \in [L_{p,q}(z)],$ $\Re L_{p,q}(z) > \Re L_{h,k}(z).$ Thus, for $X\subset R(p,q)$ compact, apply Lemma \ref{criticallemma}.  For $L_{h,k}(z)\not \in [L_{p,q}(z)]$ and $\Re L_{h,k}(z)>0,$ \begin{align*}A(z,v;h,k,n)&\le  \frac{n^\frac{s_0}{{s_0+1}}}{s_0}\left( \frac{ (\Re L_{h,k}(z))^{{s_0+1}}}{ (\Re L_{p,q}(z))^{s_0}} - \Re L_{p,q}(z) \right)\\
&\le  -\frac{n^\frac{s_0}{{s_0+1}}}{s_0}\inf_{z\in X} \left( \Re L_{p,q}(z) - \Re L_{h,k}(z) \right)<0 \end{align*} and for $\Re L_{h,k}(z)\le 0,$\begin{align*}A(z,v;h,k,n) &\le -\frac{n^\frac{s_0}{s_0+1}}{s_0}\Re L_{p,q}(z) \le -\frac{n^\frac{s_0}{s_0+1}}{s_0} L^m_X. \end{align*}
By Lemma \ref{appliedest} \[|g_{h,k}(z,\alpha_n+iv)| \le C_3 n^\frac{\sigma_0}{s_0+1}(C_4 k^{s_0-\sigma_0}+C_5)\le C_3 n^\frac{\sigma_0}{s_0+1}(C_4\delta^{s_0-\sigma_0}n^\frac{s_0-\sigma_0}{s_0+1}+C_5) . \]
Along with Assumptions (4),(5), we can conclude that there exists a $\Delta_X>0$ so that if $L_{h,k}(z)\not \in [L_{p,q}(z)],$ then\[|I_{h,k, n}(z)|\le |M_{h,k}|e^{-n^\frac{s_0}{s_0+1}\Delta_X+C_3 n^\frac{\sigma_0}{s_0+1}(C_4\delta^{s_0-\sigma_0}n^\frac{s_0-\sigma_0}{s_0+1}+C_5)}.\]
By Lemma \ref{Factorization:est} \begin{equation} \ln |\omega_{h,k,n}(z)| \le -C_1 k^{s_0}\ln (1-M_X) \le -C_1\delta^{s_0}n^\frac{s_0}{s_0+1}\ln(1-M_X).  \end{equation} 
These estimates now prove
\[\left| \omega_{h,k,n}(z) I_{h,k,n}(z)\right|\le |M_{h,k}|\exp(-n^\frac{s_0}{s_0+1} (\Delta_X-C_3C_4\delta^{s_0-\sigma_0}+C_1\delta^{s_0}\ln (1-M_X)) +C_5 C_3 ).\] 

Observe $C_5$ is the only constant that is dependent on choice of $\delta,$ and so if we set \[\eta=\Delta_X-C_3C_4\delta^{s_0-\sigma_0}+C_1\delta^{s_0}\ln (1-M_X)\]  and by making $\delta$ small enough, we can require $\eta>0. $  
\begin{align*}
|\omega_{h,k,n}(z) I_{h,k,n}(z)| &\le |M_{h,k}|\exp \left(- n^\frac{s_0}{s_0+1}\eta +C_5 C_3 \right).
\end{align*} 

Now we apply this upper bound to every ``minor arc" uniformly and we note $\sum_{h/k\in F_N} |M_{h,k}|=1.$
\begin{align*}
\sum_{\{(h,k): L_{h,k}(x)\not \in [L_{p,q}(x)]\}} |\omega_{h,k,n}(z) I_{h,k,n}(z) | &\le \exp \left(- n^\frac{s_0}{s_0+1}\eta +C_5C_3\right).
\end{align*}

\begin{remark}  The main theorem in this paper is a simplified and streamlined version of a result given in a chapter in the author's doctoral thesis \cite{ParryDissertation}.

\end{remark}
 \bibliography{mybib}{}
\bibliographystyle{amsplain}

  \end{document}